 \def \no{\nonumber}
\begin{document}
\renewcommand{\proofname}{\bf Proof}
\let\oldsection\section
\renewcommand\section{\setcounter{equation}{0}\oldsection}
\renewcommand\thesection{\arabic{section}}
\renewcommand\theequation{\thesection.\arabic{equation}}
\newtheorem{claim}{\indent Claim}[section]
\newtheorem{theorem}{\indent Theorem}[section]
\newtheorem{lemma}{\indent Lemma}[section]
\newtheorem{proposition}{\indent Proposition}[section]
\newtheorem{definition}{\indent Definition}[section]
\newtheorem{remark}{\indent Remark}[section]
\newtheorem{corollary}{\indent Corollary}[section]
\newtheorem{example}{\indent Example}[section]
\newtheorem{exercise}{\indent Exercise}[section]
\newtheorem{case}{\indent Case}

\newcommand{\nn}{\nonumber}

\title{\Large \bf
Remarks on GJMS operator of order six}

\author{Xuezhang  Chen\thanks{ X. Chen: xuezhangchen@nju.edu.cn; $^\dag$F. Hou: houfeimath@gmail.com.} ~~and Fei Hou$^\dag$\\
 \small
$^\ast$ $^\dag$Department of Mathematics \& IMS, Nanjing University, Nanjing
210093, P. R. China
}

\date{}
\maketitle
\begin{abstract}
We study analysis aspects of the sixth order GJMS operator $P_g^6$.  Under conformal normal coordinates around a point, the expansions of Green's function of $P_g^6$ with pole at this point are presented. As a starting point of the study of $P_g^6$, we manage to give some existence results of prescribed $Q$-curvature problem on Einstein manifolds. One among them is that for $n \geq 10$, let $(M^n,g)$ be a closed Einstein manifold of positive scalar curvature and $f$ a smooth positive function in $M$. If the Weyl tensor is nonzero at a maximum point of $f$ and $f$ satisfies a vanishing order condition at this maximum point, then there exists a conformal metric $\tilde g$ of $g$ such that its $Q$-curvature $Q_{\tilde g}^6$ equals $f$.

 {{\bf $\mathbf{2010}$ MSC:} Primary 53A30,58J05,53C21;~~ Secondary 35J35,35J08,35B50.}

{{\bf Keywords:} Sixth order GJMS operator, Prescribed $Q$-curvature problem, Green's function, Mountain pass critical points.}
\end{abstract}

\section{Introduction}
\indent \indent Recently, some remarkable developments have been achieved in the existence theory of positive constant $Q$-curvature problem associated to Paneitz-Branson operator. One key ingredient in such works is that a strong maximum principle for the fourth order Paneitz-Branson operator is discovered under a hypothesis on the positivity of some conformal invariants or $Q$-curvature of the background metric. The readers are referred to \cite{gursky_hang_lin,gur_mal,hy3,lx} and the references therein. This naturally stimulates us to study GJMS operator of order six and its associated $Q$-curvature problem, the analogue to the Yamabe problem and $Q$-curvature problem for Paneitz-Branson operator. Except for the aforementioned cases, due to the lack of a maximum principle for higher order elliptic equations in general, the existence theory of such problems needs to be developed. Until an analogue of Aubin's result in \cite{aubin} for the Yamabe problem is verified in Proposition \ref{prop:Y_6(M)_n>9} below, by adapting some ideas for Paneitz-Branson operator from \cite{er, dhl} we establish some existence results of prescribed $Q$-curvature problem on Einstein manifolds, in which case the sixth order GJMS operator is of constant coefficients.

The conformally covariant GJMS operators with principle part $(-\Delta_g)^k, k \in \mathbb{N}$ are discovered by Graham-Jenne-Mason-Sparling \cite{gjms}. In particular, the GJMS operator of order six and the associated $Q$-curvature are given as follows (cf. \cite{juhl,wunsch}): on manifolds $(M^n,g)$ of dimension $n\geq 3$ and $n \neq 4$, denote by $\sigma_k(A_g)$ the $k$-th elementary symmetric function of Schouten tensor $A_{ij}=\frac{1}{n-2}(R_{ij}-\frac{R_g}{2(n-1)}g_{ij})$. Denote by
 $$C_{ijk}=\nabla_k A_{ij}-\nabla_j A_{ik}, \quad B_{ij}=\Delta_g A_{ij}-\nabla^k \nabla_j A_{ik}-A^{kl}W_{kijl}=\nabla^k C_{ijk}-A^{kl}W_{kijl}$$
 the Cotton tensor and Bach tensor, respectively. Let
\begin{align*}
T_2=&(n-2) \sigma_1(A_g)g-8A_g=-\frac{8}{n-2}{\rm Ric}_g+\frac{n^2-4n+12}{2(n-1)(n-2)}R_gg;\\
T_4=&-\frac{3n^2-12n-4}{4}\sigma_1(A_g)^2 g+4(n-4)|A|_g^2 g+8(n-2)\sigma_1(A_g)A_g\\
&+(n-6)\Delta_g \sigma_1(A_g) g-48A_g^2-\frac{16}{n-4}B_g;\\
v_6=&-\frac{1}{8}\sigma_3(A_g)-\frac{1}{24(n-4)}\langle B,A\rangle_g.
\end{align*}
Then, the $Q$-curvature $Q_g^6$ is defined by
\begin{align}
Q_g^6=&-3! 2^6 v_6-\frac{n+2}{2}\Delta_g (\sigma_1(A_g)^2)+4\Delta_g |A|_g^2-8\delta(A_gd\sigma_1(A_g))+\Delta_g^2 \sigma_1(A_g)\no\\
&-\frac{n-6}{2}\sigma_1(A_g)\Delta_g\sigma_1(A_g)-4(n-6)\sigma_1(A_g)|A|_g^2+\frac{(n-6)(n+6)}{4}\sigma_1(A_g)^3\label{Q_g^6}
\end{align}
and the GJMS operator of sixth order $P_g^6$ is given by\footnote{The definition of $P_g^6$ differs from the formula (10.15) in \cite{juhl} by a minus sign.}
\begin{equation}\label{P_g}
-P_g^6=\Delta_g^3+\Delta_g \delta T_2 d+\delta T_2 d \Delta_g+\frac{n-2}{2}\Delta_g(\sigma_1(A_g)\Delta_g)+\delta T_4 d-\frac{n-6}{2}Q_g^6,
\end{equation}
where $-\delta d=\Delta_g$. The operator $P_g^6$ is conformally covariant in the sense that if $\tilde g=u^{\frac{4}{n-6}}g, 0<u\in C^\infty(M)$ with $n \geq 3$ and $n \neq 4,6$,
\begin{equation}\label{conf_invar_P_g}
u^{\frac{n+6}{n-6}}P_{\tilde g}^6 \varphi=P_g^6(u\varphi)
\end{equation}
and in dimension $6$,
$$P_{e^{2u}g}^6 \varphi=e^{-6u}P_g^6 \varphi$$
for all $\varphi \in C^\infty(M)$. When $(M,g)$ is Einstein, $P_g^6$ is of constant coefficients, explicitly,
\begin{align*}
Q_g^6=&\frac{n^4-20n^2+64}{32n^2(n-1)^3}R_g^3,\\
-P_g^6=&\Delta_g^3+\frac{-3n^2+6n+32}{4n(n-1)}R_g\Delta_g^2+\frac{3n^4-12n^3-52n^2+128n+192}{16n^2(n-1)^2}R_g^2\Delta_g-\frac{n-6}{2}Q_g^6.
\end{align*}
Obviously, when $n\geq 7$, $Q_g^6$ is a positive constant whenever the scalar curvature $R_g$ is positive. Through a direct computation, the GJMS operator $P_g^6$ has the following factorization:
 \begin{equation}\label{factorization_GJMS}
 P_g^6=\Big(-\Delta_g+\frac{(n-6)(n+4)}{4n(n-1)}R_g\Big)\Big(-\Delta_g+\frac{(n-4)(n+2)}{4n(n-1)}R_g\Big)\Big(-\Delta_g+\frac{n-2}{4(n-1)}R_g\Big).
 \end{equation}
 In general, as shown in \cite{fg} and \cite{gover}, on Einstein manifolds the GJMS operator of order $2k$ for all positive integers $k$ satisfies the above property as
 $$P_g^{2k}=\prod_{i=1}^k\Big(-\Delta_g+\frac{R_g}{4n(n-1)}(n+2i-2)(n-2i)\Big).$$
In particular, choose $M^n=S^n, g=g_{S^n}$, then
\begin{align*}
Q_{S^n}^6=&\tfrac{n(n^4-20 n^2+64)}{32},\\
P_{S^n}^6=&-\Delta_{S^n}^3-\tfrac{-3n^2+6n+32}{4}\Delta_{S^n}^2-\tfrac{3n^4-12n^3-52n^2+128n+192}{16}\Delta_{S^n}+\tfrac{n-6}{2}Q_{S^n}^6\\
=&\Big(-\Delta_{S^n}+\tfrac{(n-6)(n+4)}{4}\Big)\Big(-\Delta_{S^n}+\tfrac{(n-4)(n+2)}{4}\Big)\Big(-\Delta_{S^n}+\tfrac{n(n-2)}{4}\Big).
\end{align*}
From now on, we set $P_g=P_g^6$ and $Q_g=Q_g^6$ in the whole paper unless stated otherwise. Then, for any $\varphi \in H^3(M,g)$, we get
\begin{align*}
&\int_M \varphi P_g \varphi d\mu_g\\
=&\int_M\Big(|\nabla \Delta \varphi|_g^2-2T_2(\nabla \Delta \varphi,\nabla \varphi)-\frac{n-2}{2}\sigma_1(A)(\Delta_g \varphi)^2-T_4(\nabla \varphi,\nabla \varphi)+\frac{n-6}{2}Q_g \varphi^2\Big) d\mu_g.
\end{align*}

As a starting point of the study on the sixth order GJMS operator, we obtain some existence results of conformal metrics with positive $Q$-curvature candidates on closed Einstein manifolds under some additional natural assumptions.
\begin{theorem}\label{main_Thm}
Suppose $(M^n,g)$ is a closed Einstein manifold of dimension $n\geq 10$ and has positive scalar curvature. Let $f$ be a smooth positive function in $M$. Assume the Weyl tensor $W_g$ is nonzero at a maximum point $p$ of $f$ and $f$ satisfies the vanishing order condition at $p$:
\begin{equation}\label{vanishing_order_con}
\left \{\begin{array}{ll}
\Delta_g f(p)=0, &\hbox{~~if~~} n=10,\\
\nabla^k f(p)=0, k=2,3,4, &\hbox{~~if~~} n \geq 11.
\end{array}
\right.
\end{equation}
 Then there exists a smooth solution to the $Q$-curvature equation
\begin{equation*}
P_g u=fu^{\frac{n+6}{n-6}}, \quad u>0\hbox{~~in~~} M.
\end{equation*}
\end{theorem}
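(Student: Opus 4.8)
The plan is to realise the solution as a mountain‑pass critical point of the conformally invariant energy attached to $P_g$, imitating the scheme of Esposito--Robert \cite{er} and Djadli--Hebey--Ledoux \cite{dhl} for the Paneitz--Branson operator, and to break the associated compactness threshold by a sixth‑order Aubin‑type test‑function estimate at the maximum point $p$. \textbf{Step 1 (variational framework).} Since $R_g>0$ and $n\ge 7$, each factor $-\Delta_g+c_iR_g$ in the factorization \eqref{factorization_GJMS} has $c_iR_g>0$, hence is positive; on the Einstein background these factors have constant coefficients, so they commute, $P_g$ is positive and self‑adjoint, and $\langle\varphi,\varphi\rangle_{P}:=\int_M\varphi P_g\varphi\,d\mu_g$ is an inner product on $H^3(M,g)$ with norm equivalent to the standard one. (Being Einstein enters only here and, via the same factorization, in the maximum principle of Step 4.) On $H^3(M,g)$ define
\[
I(u)=\tfrac12\int_M u P_g u\,d\mu_g-\tfrac{n-6}{2n}\int_M f\,(u^{+})^{\frac{2n}{n-6}}\,d\mu_g .
\]
By the Sobolev embedding $H^3(M,g)\hookrightarrow L^{2n/(n-6)}(M)$, $I\in C^1$, its critical points solve $P_g u=f(u^{+})^{(n+6)/(n-6)}$, and since $2n/(n-6)>2$ one has $I(0)=0$, $I\ge\alpha>0$ on a small sphere $\{\|u\|_{P}=\rho\}$, and $I(tu)\to-\infty$ as $t\to\infty$ whenever $\int_M f(u^{+})^{2n/(n-6)}>0$. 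Thus $I$ has the mountain‑pass geometry, and a standard argument identifies the mountain‑pass level as
\[
c=\frac{3}{n}\Big(\inf_{u}\frac{\int_M u P_g u\,d\mu_g}{\big(\int_M f(u^{+})^{2n/(n-6)}d\mu_g\big)^{(n-6)/n}}\Big)^{n/6}\in(0,\infty) .
\]

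\textbf{Step 2 (Palais--Smale below the threshold).} Let $S_6$ be the best constant in $\mathcal D^{3,2}(\mathbb R^n)\hookrightarrow L^{2n/(n-6)}(\mathbb R^n)$ and set $c^{*}=\tfrac3n\big(S_6(\max_M f)^{-(n-6)/n}\big)^{n/6}$. A Palais--Smale sequence for $I$ at a level $\beta$ is bounded by coercivity, hence, up to a subsequence, converges weakly to a weak solution $u_0$; by the maximum‑principle argument of Step 4, $u_0\ge0$, and testing the equation with $u_0$ gives $I(u_0)=\tfrac3n\int_M f u_0^{2n/(n-6)}\ge0$. A Brezis--Lieb decomposition combined with the blow‑up analysis --- after rescaling about a concentration point $q$ the coefficient $R_g$ and all lower‑order coefficients of $P_g$ scale away, so the limiting profile solves $(-\Delta)^3U=f(q)U^{(n+6)/(n-6)}$ on $\mathbb R^n$, whose positive solutions are exactly the standard bubbles --- shows that the loss of compactness consists of finitely many positive bubbles, each carrying energy $\tfrac3n\big(S_6 f(q)^{-(n-6)/n}\big)^{n/6}\ge c^{*}$. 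Hence $\beta=I(u_0)+(\text{number of bubbles})\cdot(\ge c^{*})$ with $I(u_0)\ge0$, so $I$ satisfies the Palais--Smale condition at every level in $(0,c^{*})$; in particular a nonzero critical point exists at level $c$ as soon as $c<c^{*}$.

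\textbf{Step 3 (breaking the threshold --- the main obstacle).} It remains to prove $c<c^{*}$. Using the conformal invariance of the energy, $\int_M u P_g u\,d\mu_g=\int_M\hat u P_{\hat g}\hat u\,d\mu_{\hat g}$ and $\int_M f u^{2n/(n-6)}d\mu_g=\int_M f\hat u^{2n/(n-6)}d\mu_{\hat g}$ with $\hat g=\varphi^{4/(n-6)}g$ and $u=\varphi\hat u$, I would replace $g$ by the metric $\hat g\in[g]$ whose $\hat g$‑geodesic normal coordinates at $p$ are conformal normal coordinates, so that $d\mu_{\hat g}=(1+O(|x|^{N}))\,dx$ and $R_{\hat g}$ vanishes to high order at $p$ --- this is precisely what removes the otherwise fatal \emph{positive} $\varepsilon^{2}$ contribution of the $R_g(\Delta\,\cdot\,)^2$ term of $P_g$. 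Transplanting the standard extremals $U_\varepsilon(x)=\alpha_n\big(\varepsilon/(\varepsilon^{2}+|x|^{2})\big)^{(n-6)/2}$ of $S_6$ through these coordinates and cutting off near $p$, one expands
\[
\frac{\int_M u_\varepsilon P_g u_\varepsilon\,d\mu_g}{\big(\int_M f u_\varepsilon^{2n/(n-6)}d\mu_g\big)^{(n-6)/n}}
=\frac{S_6}{(\max_M f)^{(n-6)/n}}-c_n\,|W_g(p)|^{2}\,\varepsilon^{4}\,\theta_n(\varepsilon)\,\big(1+o(1)\big),
\]
with $c_n>0$ and $\theta_n(\varepsilon)=\log(1/\varepsilon)$ for $n=10$, $\theta_n\equiv1$ for $n\ge11$; all the remaining curvature‑squared terms produced by the nonconstant coefficients $T_2,T_4,Q$ of $P_g$ in this conformal gauge are of order $\varepsilon^{4}$ and, together with the Weyl term, yield the stated strictly negative correction --- this is the computational core of Proposition \ref{prop:Y_6(M)_n>9}. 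The hypothesis \eqref{vanishing_order_con} is used to ensure that the Taylor corrections of $f$ at $p$, of orders $\varepsilon^{2},\varepsilon^{3},\varepsilon^{4}$, do not interfere: for $n=10$, $\Delta_gf(p)=0$ together with $\nabla^{2}f(p)\le0$ forces $\nabla^{2}f(p)=0$ and kills the $\varepsilon^{2}$ term that would otherwise swamp the $\varepsilon^{4}\log$ Weyl term, while for $n\ge11$ the vanishing of $\nabla^{k}f(p)$, $k=2,3,4$, removes every $f$‑contribution down to the order of the Weyl term. Hence $\max_{t>0}I(tu_\varepsilon)=\tfrac3n(\text{above quotient})^{n/6}<c^{*}$ for small $\varepsilon$, and since $t\mapsto tu_\varepsilon$ is an admissible path, $c<c^{*}$. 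I expect this step --- showing that the aggregate $\varepsilon^{4}$‑coefficient, a universal expression in $|W_g(p)|^{2}$ (and $\nabla W_g(p)$, $R_g$), is genuinely negative --- to be the principal difficulty, and it is exactly where both $n\ge10$ and $W_g(p)\neq0$ are indispensable.

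\textbf{Step 4 (positivity, regularity, conclusion).} By Steps 2--3 there is a nonzero critical point $u$ of $I$ at level $c<c^{*}$, i.e. $P_g u=f(u^{+})^{(n+6)/(n-6)}\ge0$ weakly. Writing $P_g=L_1L_2L_3$ with $L_i=-\Delta_g+c_iR_g$, $c_iR_g>0$, and applying the maximum principle successively to $L_1(L_2L_3u)\ge0$, then $L_2(L_3u)\ge0$, then $L_3u\ge0$, one gets $u\ge0$; since $u\not\equiv0$, the strong maximum principle for $-\Delta_g+c_3R_g$ gives $u>0$. Then $(u^{+})^{(n+6)/(n-6)}=u^{(n+6)/(n-6)}$, so $u$ solves $P_g u=f u^{(n+6)/(n-6)}$, and a standard elliptic bootstrap starting from $u\in H^{3}(M)\subset L^{2n/(n-6)}(M)$ yields $u\in C^{\infty}(M)$, which is the asserted solution; by the conformal covariance \eqref{conf_invar_P_g} the metric $\tilde g=u^{4/(n-6)}g$ then has $Q^6_{\tilde g}$ equal to $\tfrac{2}{n-6}f$.
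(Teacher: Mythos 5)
Your overall strategy coincides with the paper's: the theorem is obtained as a mountain-pass critical point of the $P_g$-energy, the threshold $\tfrac3n Y_6(S^n)^{n/6}(\max_Mf)^{(6-n)/6}$ is beaten by transplanting the bubbles $u_\epsilon$ in conformal normal coordinates at the point where $W_g(p)\neq0$ (your Step 3 correctly defers the negativity of the aggregate $\epsilon^4$-coefficient to the computation inside Proposition \ref{prop:Y_6(M)_n>9}, and your accounting of how \eqref{vanishing_order_con} suppresses the Taylor corrections of $f$ — only $\Delta_g f(p)$ enters at order $\epsilon^2$ by radial symmetry — matches the paper), positivity comes from the constant-coefficient factorization \eqref{factorization_GJMS} and iterated maximum principles on the Einstein background, and regularity from a bootstrap as in Lemma \ref{lem:regularity}. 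Your Step 4 is in fact a slightly cleaner version of the paper's positivity argument, since applying the maximum principle successively to $L_1L_2L_3u\ge0$ already yields $u\ge0$ without the paper's preliminary testing with $u_-$.

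The one place where you genuinely diverge, and where your write-up has a gap, is the compactness step (Step 2). You invoke a full Struwe-type blow-up/energy-quantization theorem for sixth-order Palais--Smale sequences, together with the assertion that the positive entire solutions of $(-\Delta)^3U=U^{\frac{n+6}{n-6}}$ are exactly the standard bubbles. Neither is proved nor cited, and for this operator neither is off-the-shelf in the paper's framework; moreover the limiting profiles of a Palais--Smale sequence for the $(u^+)$-functional need not be positive, so the classification you appeal to would not even apply directly (it is also unnecessary: a Sobolev-inequality argument gives the energy lower bound $\ge c^*$ for any nontrivial profile). The paper avoids this machinery entirely: it shows the (P-S)$_\beta$ condition for $\beta$ below the threshold directly, by combining the Brezis--Lieb decomposition with an almost-sharp Sobolev inequality with remainder on $M$ (the ``cut-and-paste'' inequality borrowed from \cite{dhl}), which forces $\int_M|\nabla\Delta(u_k-u)|_g^2\to0$ and hence strong convergence. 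To make your Step 2 rigorous you should either supply (or properly cite) the sixth-order decomposition theorem, or replace it by this more elementary argument. Two minor points: the claimed identity between the mountain-pass level and the infimum of the Nehari-type quotient is not justified, but only the inequality $c\le\sup_{t>0}I(tu_\epsilon)$ is needed, so this is harmless; and for $n=10$ you do not need $\nabla^2f(p)=0$, the hypothesis $\Delta_gf(p)=0$ already kills the $\epsilon^2$ term.
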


We remark that the condition \eqref{vanishing_order_con} imposed on the $Q$-curvature candidates $f$ is conformally invariant. The condition that $(M,g)$ is Einstein is only used to seek a \textit{\underline{positive}} solution.Theorem \ref{main_Thm} is a special case of a generalized Theorem \ref{generalized_main_Thm}.

This paper is organized as follows. In section \ref{sec2}, the expansions of Green's function for $P_g$ when $n \geq 7$ are presented under  conformal normal coordinates around a point. The technique used here is basically inspired by Lee-Parker \cite{lp}, see also \cite{hy3}. The complicate computations of the term $P_g(r^{6-n})$ are left to Appendix \ref{app_A}, where $r$ is the geodesic distant from this point. In section \ref{sec3}, we prove an analogue (cf. Proposition \ref{prop:Y_6(M)_n>9}) of Aubin's result for any closed manifold of dimension $n \geq 10$, which is not locally conformal flat. Based on this result, using the Moutain Pass Lemma we state in Theorem \ref{generalized_main_Thm} some results of prescribed $Q$-curvature problem associated to the sixth order GJMS operator on Einstein manifolds. Then our main Theorem \ref{main_Thm} directly follows from Theorem \ref{generalized_main_Thm}.

\noindent{\bf Acknowledgments:}
The first author is supported through NSFC (No.11201223), A Foundation for the Author of National Excellent Doctoral Dissertation of China (No.201417)  and Program for New Century Excellent Talents in University (NCET-13-0271). We thank the referee for careful reading and for helpful advice which improves the exposition.

\section{Expansion of Green's function of $\mathbf{P_g}$}\label{sec2}

Based on the survey paper by Lee-Parker \cite{lp} on the Yamabe problem, the method of deriving expansions of Green's function of $P_g$ is more or less standard except for careful computations on some lower order terms involved in $P_g$. One may also refer to \cite{hy3} for the Paneitz-Branson operator case. Green's functions of conformally covariant operators play an important role in the solvability of the constant curvature problems, for instance, the Yamabe problem (cf. \cite{lp} etc.) and the constant $Q$-curvature problem for Paneitz-Branson operator (cf.\cite{dhl,er,gursky_hang_lin,hy3}, etc.). Especially, F. Hang and P. Yang \cite{hy3} set up a dual variational method of the minimization for the Paneitz-Branson functional to seek a positive maximizer of the dual functional, such a scheme heavily relies on the positivity and expansion of its Green's function. We expect that the expansion of Green's function for $P_g^6$ will be useful to some possible future applications.

Throughout the whole paper, we use the following notation: $2^\sharp=\frac{2n}{n-6}, \omega_n={\rm vol}(S^n,g_{S^n})$ and when $n>6$, $c_n=\frac{1}{8(n-2)(n-4)(n-6)\omega_{n-1}}$. For $m \in \mathbb{Z}_+$, let
$$\mathcal{P}_m:=\{\hbox{homogeneous polynomials in~~}\mathbb{R}^n \hbox{~~of degree~~} m\}$$
and
$$\mathcal{H}_m:=\{\hbox{harmonic polynomials in~~}\mathbb{R}^n \hbox{~~of degree~~} m \}.$$
Moreover, $\mathcal{P}_m$ has the following decomposition (cf. \cite{stein}, P. 68-70)
$$\mathcal{P}_m=\bigoplus_{k=0}^{[\frac{m}{2}]}(r^{2k}\mathcal{H}_{m-2k}).$$

\begin{proposition}\label{prop_expan_G}
Assume $n>6$ and ${\rm ker} P_g=0$. Let $G_p(x)$ be the Green's function of sixth order GJMS operator at the pole $p\in M^n$ with the property that $P_g G_p=c_n \delta_p$ in the sense of distribution. Then, under the conformal normal coordinates around $p$ with conformal metric $g$, $G_p(x)$ has the following expansions:
\begin{enumerate}
\item[(a)] If $n$ is odd, then
$$G_p(x)=r^{6-n}(1+\sum_{k=1}^n \psi_k)+A+O(r),$$
where $A$ is a constant and $\psi_k \in \mathcal{P}_k$.
\item[(b)] If $n$ is even, then
\begin{eqnarray*}
G_p(x)&=&r^{6-n}(1+\sum_{k=1}^n \psi_k)+r^{6-n}\Big(\sum_{k=n-4}^n \varphi_k\Big) \log r+r^{6-n}\Big(\sum_{k=n-4}^n \varphi'_k\Big) \log^2 r\\
&&+r^{6-n}\Big(\sum_{k=n-2}^n \varphi''_k\Big) \log^3 r+\varphi'''_n \log^4 r+A+O(r),
\end{eqnarray*}
where $A$ is a constant and $\psi_k,\varphi_k,\varphi'_k,\varphi''_k,\varphi'''_k \in \mathcal{P}_k$.
\end{enumerate}
Moreover, we may restate some of the above results in another way.
\begin{enumerate}
\item[(c)] If $n=7,8,9$ or $M$ is conformally flat near $p$, then
$$G_p(x)=c_n r^{6-n}+A+O(r),$$
where $A$ is a constant.
\item[(d)] If $n=10$, then
$$G_p(x)=c_n r^{-4}+\frac{1}{17280}|W(p)|^2 \log r+O(1).$$
\item[(e)] If $n \geq 11$, then
$$G_p(x)=c_n r^{6-n}+\psi_4 r^{6-n}+O(r^{11-n}),$$
where $\psi_4 \in \mathcal{P}_4$ and
\begin{align*}
\psi_4(x)=&\frac{1}{135(n-2)}\Big[\sum_{k,l}(W_{iklj}(p)x^ix^j)^2-\frac{r^2}{n+4}\sum_{k,l,s}\big((W_{ikls}(p)+W_{ilks}(p))x^i\big)^2\\
                &+\frac{3}{2(n+4)(n+2)}|W(p)|^2 r^4\Big]\\
                &+\frac{3n-20}{270(n+4)(n-4)(n-8)}r^2\Big[\sum_{k,l,s}\big((W_{ikls}(p)+W_{ilks}(p))x^i\big)^2-\frac{3}{n}|W(p)|^2 r^2\Big]\\
                &-\frac{5n^2-66n+224}{120(n-8)(n-4)}r^2\Big[\sigma_1(A)_{,ij}(p)x^ix^j+\frac{|W(p)|^2}{12n(n-1)}r^2\Big]\\
                &+\frac{3n^4-16n^3-164n^2+400n+2432}{576(n+4)(n+2)n(n-1)}|W(p)|^2 r^4\Big\}.
\end{align*}
\end{enumerate}
\end{proposition}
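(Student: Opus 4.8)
The plan is to follow the parametrix construction of Lee--Parker \cite{lp} (compare \cite{hy3} for the Paneitz--Branson operator), adapted to the sixth order operator $P_g$. First I would fix conformal normal coordinates centred at $p$ with associated metric $g$, so that $\det g_{ij}=1+O(r^{N})$ for $N$ as large as desired and $\mathrm{Ric}_g$ together with its symmetrised covariant derivatives vanish at $p$; in particular $R_g=O(r^{2})$ and the local curvature at $p$ is carried entirely by $W(p)$. Since $(-\Delta)^{3}(c_n r^{6-n})=\delta_0$ in Euclidean space, with precisely the normalising constant $c_n$ of the statement, $c_n r^{6-n}$ is the natural first approximation to $G_p$, and $P_g(c_n r^{6-n})$ agrees with the prescribed singularity up to a genuine function $E_1$ that is locally integrable away from $p$. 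The heart of the argument is the near-$p$ behaviour of $E_1$, computed in Appendix \ref{app_A}: expanding each term of $-P_g=\Delta_g^{3}+\cdots$ from \eqref{P_g} in these coordinates and exploiting the trace-free and pair-symmetry properties of $W$ together with the vanishing of $\mathrm{Ric}_g$ and its symmetrised derivatives at $p$, one checks that every contribution of homogeneity below $4-n$ cancels, the first surviving one being of homogeneity $4-n$ and quadratic in the curvature, i.e.\ built out of $|W(p)|^{2}$, $W(p)\otimes W(p)$ and $\nabla^{2}R_g(p)$. This already forces $\psi_1=\psi_2=\psi_3=0$ and hence gives (c) for $n=7,8,9$; and if $M$ is conformally flat near $p$ then $E_1\equiv0$ to all orders, which gives (c) there as well.

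Next I would remove the error inductively. Using the decomposition $\mathcal{P}_m=\bigoplus_{k}r^{2k}\mathcal{H}_{m-2k}$ and the Euclidean identity
\[
-\Delta\big(r^{\beta}Y\big)=-\beta(\beta+2\ell+n-2)\,r^{\beta-2}Y ,\qquad Y\in\mathcal{H}_\ell ,
\]
one sees that $u\mapsto(-\Delta)^{3}(r^{6-n}u)$ carries $r^{6-n+2k}\mathcal{H}_{m-2k}$ onto $r^{2k-n}\mathcal{H}_{m-2k}$ by a scalar whose only possibly vanishing factors are $6-n+2k$, $4-n+2k$ and $2-n+2k$ (the remaining factors being positive for $m\ge1$); so, given a leading error of homogeneity $m-n$, one matches it by a correction $r^{6-n}\psi_m$ with $\psi_m\in\mathcal{P}_m$ and solves sector by sector, \emph{provided} $6-n+2k\notin\{0,2,4\}$ for the $k$ in question. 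When $n$ is odd this obstruction never arises (the left sides have the parity of $n$), so the iteration proceeds with polynomial corrections at every step and no logarithm appears, which is case (a). For $n\ge11$ the degree-$4$ step is non-resonant; solving it by this Euclidean inversion applied to the explicit homogeneity-$(4-n)$ error of Appendix \ref{app_A} --- which gathers the curvature contributions of $(-\Delta_g)^{3}-(-\Delta)^{3}$ and of the lower order terms of $P_g$ alike --- produces the polynomial $\psi_4$ displayed in (e), and, since the next error lies at homogeneity $\ge5-n$, one more non-resonant step leaves the remainder $O(r^{11-n})$ of (e).

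When $n$ is even the obstruction $6-n+2k\in\{0,2,4\}$ first occurs at the step $m=n-6$, in the summand $r^{\,n-6}\mathcal{H}_0$ of $\mathcal{P}_{n-6}$, where $(-\Delta)^{3}$ annihilates the monomial $r^{\,n-6}$. There one cancels the offending error, now of homogeneity $-6$, by a multiple of $\log r$, using $(-\Delta)^{3}\log r=-8(n-2)(n-4)(n-6)\,r^{-6}$; iterating, the three resonance types $6-n+2k\in\{0,2,4\}$ recur at the successive degrees $n-6,\dots,n$, their effects compound, and one is forced to introduce, in turn, terms carrying $\log r$, $\log^{2}r$, $\log^{3}r$ and finally $\log^{4}r$, giving the structure (b). In the borderline dimension $n=10$ the resonance already appears at $m=4$; the $\mathcal{H}_0$-component of the corresponding error is a definite multiple of $|W(p)|^{2}r^{-6}$ by Appendix \ref{app_A}, and balancing it against $(-\Delta)^{3}\log r=-1536\,r^{-6}$ together with the normalisation $c_{10}$ yields the coefficient $\tfrac{1}{17280}|W(p)|^{2}$ of the logarithm in (d), all remaining corrections being $O(1)$.

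Finally, carrying the cancellation up to degree $m=n$ leaves a remaining error of order $O(r)$ (or smoother, if one continues); since $\ker P_g=0$, the equation $P_g w=-(\text{remaining error})$ has a smooth solution $w$, and Taylor-expanding $w$ to first order and folding $w(p)$ into the constant $A$ produces the $+A+O(r)$ tail (respectively $+O(1)$ when $n=10$, $+O(r^{11-n})$ when $n\ge11$) in all the stated expansions. I expect the real obstacle to be Appendix \ref{app_A}: computing $P_g(r^{6-n})$ in conformal normal coordinates sharply enough both to verify the cancellation of every sub-leading homogeneity and to pin down the exact constants multiplying $|W(p)|^{2}$, $W(p)\otimes W(p)$ and $\nabla^{2}R_g(p)$ in $\psi_4$ and in the $n=10$ logarithm, which requires carefully assembling the conformal normal coordinate expansions of the metric, of $\sigma_1(A_g)$, $|A|_g^{2}$ and $B_g$, and of the curvature terms entering \eqref{P_g}.
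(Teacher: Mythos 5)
Your proposal follows essentially the same route as the paper: the Lee--Parker parametrix iteration in conformal normal coordinates, driven by the Appendix \ref{app_A} expansion of $P_g(r^{6-n})$, with sector-by-sector inversion of the flat operator on $r^{2k}\mathcal{H}_{m-2k}$ (the paper's $A_{2-n}A_{4-n}A_{6-n}$ restricted to these summands), logarithmic corrections at the even-dimensional resonances producing $\log r$ through $\log^4 r$, the explicit degree-$4$ error giving $\psi_4$ and the $n=10$ coefficient, and a final elliptic-regularity step using ${\rm ker}\,P_g=0$ to obtain the $A+O(r)$ tail. The only departures are cosmetic: the paper handles $n=8$ by a separate $L^p$-regularity shortcut and packages the curvature corrections into the operator $K_{6-n}$ at each inductive step, whereas you fold both into the same induction, which changes nothing essential.
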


Before starting to derive the expansion of Green's function of $P_g$, we first need to introduce some notation. For $\alpha \in \mathbb{R}$, set
\begin{align*}
       A_\alpha=&r^2\Delta_0+2\alpha r\partial_r+\alpha(\alpha+n-2),\\
       A_{\alpha,g}=&r^2\Delta_g+2\alpha r\partial_r+\alpha(\alpha+n-2),
\end{align*}
where $\Delta_0$ denotes the Euclidean Laplacian, and
\begin{eqnarray*}
       B_\alpha=\frac{\partial}{\partial\alpha}A_\alpha=2r\partial_r+2\alpha+n-2.
\end{eqnarray*}
For $k \in \mathbb{Z}_+$, a straightforward computation yields (also see \cite[Lemma 2.4]{hy3})
$$A_\alpha(\varphi \log^k r)=A_\alpha \varphi \log^k r+kB_\alpha \varphi \log^{k-1}r+k(k-1)\varphi \log^{k-2} r.$$
From this, for $\alpha,\beta,\gamma \in \mathbb{R}$ we get 
\begin{align}\label{A^3_varphi_logr}
&A_{\gamma}A_\beta A_\alpha (\varphi \log^k r)\no\\
=&A_{\gamma}A_\beta A_\alpha \varphi \log^k r\no\\
&+k(B_\gamma A_\beta A_\alpha+A_\gamma B_\beta A_\alpha+A_\gamma A_\beta B_\alpha)\varphi \log^{k-1} r\no\\
&+k(k-1)(A_\beta A_\alpha+B_\gamma B_\beta A_\alpha+B_\gamma A_\beta B_\alpha+A_\gamma B_\beta B_\alpha+A_\gamma A_\alpha+A_\gamma A_\beta)\varphi \log^{k-2}r\no\\
&+k(k-1)(k-2)(B_\beta A_\alpha+A_\beta B_\alpha+B_\gamma A_\alpha+B_\gamma B_\beta B_\alpha+B_\gamma A_\beta+A_\gamma B_\alpha+A_\gamma B_\beta)\varphi \log^{k-3}r\no\\
&+k(k-1)(k-2)(k-3)(A_\alpha+A_\beta+A_\gamma+B_\gamma B_\beta+B_\gamma B_\alpha+B_\beta B_\alpha)\varphi \log^{k-4} r\no\\
&+k(k-1)(k-2)(k-3)(k-4)(B_\alpha+B_\beta+B_\gamma)\varphi \log^{k-5}r\no\\
&+k(k-1)(k-2)(k-3)(k-4)(k-5)\varphi \log^{k-6}r.
\end{align}

A direct computation yields
\begin{align*}
       \Delta_0(r^\alpha\varphi)   &= r^{\alpha-2}A_\alpha\varphi, \\
       \Delta_0^2(r^\alpha\varphi) &= \Delta_0(r^{\alpha-2}A_\alpha\varphi)=r^{\alpha-4}A_{\alpha-2}A_\alpha\varphi, \\
       \Delta_0^3(r^\alpha\varphi) &= r^{\alpha-6}A_{\alpha-4}A_{\alpha-2}A_\alpha\varphi.
\end{align*}
In particular,
\begin{equation*}
       \Delta_0^3(r^{6-n}\varphi) = r^{-n}A_{2-n}A_{4-n}A_{6-n}\varphi.
\end{equation*}
Define
$$M_g:=\Delta_g \delta T_2 d+\delta T_2 d \Delta_g+\frac{n-2}{2}\Delta_g(\sigma_1(A)\Delta_g)+\delta T_4 d,$$
then rewrite \eqref{P_g} as
$-P_g=(\Delta_g)^3+M_g-\frac{n-6}{2}Q_g.$
Notice that
\begin{align*}
A_{\alpha,g}=&A_\alpha+r^2(\Delta_g-\Delta_0)=A_\alpha+r^2\partial_i((g^{ij}-\delta^{ij})\partial_j),\\
-P_g(r^\alpha \varphi)=&r^{\alpha-6}(A_{\alpha-4}A_{\alpha-2}A_\alpha \varphi+K_\alpha \varphi),
\end{align*}
where
\begin{align}
K_\alpha \varphi=&r^2(\Delta_g-\Delta_0)A_{\alpha-2}A_\alpha \varphi+A_{\alpha-4}(r^2(\Delta_g-\Delta_0))A_\alpha \varphi+A_{\alpha-4}A_{\alpha-2}(r^2(\Delta_g-\Delta_0)) \varphi\no\\
&+r^{6-\alpha}M_g(r^\alpha \varphi)-\frac{n-6}{2}r^6Q_g \varphi.\label{K_alpha}
\end{align}

We first state the expression of $P_g(r^{6-n})$ and leave complicate computations to Appendix \ref{app_A}.
\begin{lemma}\label{lem_P_g_r^{6-n}}
Under conformal normal coordinates around $p$ with metric $g$, there holds
\begin{align*}
&-P_g(r^{6-n}) \\
                 =&-c_n \delta_p+(n-6)r^{-n}\Big\{\tfrac{64(n-4)}{9}\Big[\sum_{k,l}(W_{iklj}(p)x^ix^j)^2-\tfrac{r^2}{n+4}\sum_{k,l,s}\big((W_{ikls}(p)+W_{ilks}(p))x^i\big)^2\\
                &+\tfrac{3}{2(n+4)(n+2)}|W(p)|^2 r^4\Big]+\tfrac{16(3n-20)}{9(n+4)}r^2\Big[\sum_{k,l,s}\big((W_{ikls}(p)+W_{ilks}(p))x^i\big)^2-\frac{3}{n}|W(p)|^2 r^2\Big]\\
                &-4(5n^2-66n+224)r^2\Big[\sigma_1(A)_{,ij}(p)x^ix^j+\frac{|W(p)|^2}{12n(n-1)}r^2\Big]\\
                &+\frac{3n^4-16n^3-164n^2+400n+2432}{3(n+4)(n+2)n(n-1)}|W(p)|^2 r^4\Big\}+O(r^{5-n}),
\end{align*}
where $W_{ijkl}$ is the Weyl tensor of metric $g$ and each term in square brackets on the right hand side of the identity is harmonic polynomial.

\end{lemma}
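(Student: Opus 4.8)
The plan is to compute $-P_g(r^{6-n})$ directly in conformal normal coordinates by exploiting the operator identity
$-P_g(r^{6-n}\varphi)=r^{-n}\big(A_{2-n}A_{4-n}A_{6-n}\varphi+K_{6-n}\varphi\big)$
derived above, applied with $\varphi\equiv 1$. Since $A_{6-n}1=(6-n)(6-n+n-2)=(6-n)\cdot 4$, and more to the point the flat part vanishes: $\Delta_0^3(r^{6-n})=0$ away from the origin, so $A_{2-n}A_{4-n}A_{6-n}1=0$ on $\mathbb{R}^n\setminus\{0\}$. Hence away from $p$ the whole contribution comes from the "error'' term $K_{6-n}1$, which by \eqref{K_alpha} with $\alpha=6-n$, $\varphi=1$ reads
\[
K_{6-n}1=r^2(\Delta_g-\Delta_0)A_{4-n}A_{6-n}1+A_{2-n}\big(r^2(\Delta_g-\Delta_0)\big)A_{6-n}1+A_{2-n}A_{4-n}\big(r^2(\Delta_g-\Delta_0)\big)1+r^nM_g(r^{6-n})-\tfrac{n-6}{2}r^6Q_g.
\]
The first three terms still contain $\Delta_g-\Delta_0$ acting on things that are, to leading order, constants or low-degree polynomials, so the main analytic input is the expansion of $g^{ij}-\delta^{ij}$ in conformal normal coordinates.

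The key structural fact I would use is the Lee–Parker conformal normal coordinate expansion: one may choose the conformal representative $g$ so that $\det g_{ij}=1+O(r^N)$ for large $N$, which forces $\mathrm{Ric}_g(p)=0$, $\sigma_1(A_g)(p)=0$, the symmetrized derivatives $\nabla^{(k)}\mathrm{Ric}_g(p)$ to vanish for low $k$, and $R_g=O(r^2)$ with $\Delta_g R_g(p)=-\tfrac{1}{6}|W(p)|^2$ (up to the usual normalization constant). Consequently the metric expansion is governed by the Weyl tensor: $g_{ij}=\delta_{ij}-\tfrac13 W_{iklj}(p)x^kx^l+O(r^3)$ schematically, and $g^{ij}-\delta^{ij}=\tfrac13 W_{iklj}(p)x^kx^l+O(r^3)$. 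Plugging this into $r^2(\Delta_g-\Delta_0)=r^2\partial_i\big((g^{ij}-\delta^{ij})\partial_j\big)$ and carefully tracking which homogeneity degrees survive after composing with the second-order operators $A_{2-n},A_{4-n}$ (each of which preserves or lowers polynomial degree while shifting the spectral parameter) produces, degree by degree, the harmonic polynomials in square brackets. The terms involving $\sigma_1(A)_{,ij}(p)x^ix^j$ come from the $R_g$-dependent pieces of $M_g$ and $Q_g$; note $Q_g=-\tfrac{1}{12}\Delta_g R_g\cdot(\text{const})+\dots$ at $p$ contributes the $|W(p)|^2 r^4$ correction, and the Bach-tensor terms hidden in $T_4$ must be expanded using $B_g(p)$ expressed through $W$. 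The $-c_n\delta_p$ term is the distributional artifact: $\Delta_0^3(r^{6-n})=-(6-n)(4-n)(2-n)\cdot(\text{something})\,\omega_{n-1}\delta_0$ in the distributional sense, and tracing constants through $c_n=\tfrac{1}{8(n-2)(n-4)(n-6)\omega_{n-1}}$ yields exactly $-c_n\delta_p$.

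The main obstacle will be the sheer bookkeeping of constants: one must expand $g^{ij}$, $R_g$, $A_g$, $B_g$, and $Q_g$ simultaneously in conformal normal coordinates to the order that produces $r^{-n}\cdot(\text{degree-}4\text{ harmonic polynomials})$, then push all of this through the three non-commuting operators $A_{2-n}A_{4-n}A_{6-n}$ plus the lower-order operators $M_g$ and $Q_g$, and finally verify that the degree-$2$ and degree-$4$ non-harmonic parts cancel (as the statement asserts, each bracket is harmonic — this is the crucial consistency check, forced because $G_p$ must be smooth modulo the explicit $r^{6-n}$-type singularities). I expect the cleanest route is: (i) reduce to showing the coefficient of each $r^{-n}x^\beta$ monomial, obtained from $K_{6-n}1$, matches; (ii) use the contracted second Bianchi identity and the decomposition $\mathcal{P}_m=\bigoplus_k r^{2k}\mathcal{H}_{m-2k}$ to organize the output into harmonic pieces; (iii) isolate the $|W(p)|^2$ coefficients, which can be cross-checked against the known $n=10$ case $\tfrac{1}{17280}|W(p)|^2\log r$ in Proposition \ref{prop_expan_G}(d). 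All of the heavy term-by-term computation is deferred to Appendix \ref{app_A}; here the proposal is simply to set up this reduction and confirm the harmonicity of each bracketed term by direct application of $\Delta_0$.
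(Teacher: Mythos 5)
Your reduction is sound as far as it goes: with $\varphi\equiv 1$ one indeed has $A_{2-n}A_{4-n}A_{6-n}1=0$, and in fact the first three terms of $K_{6-n}1$ vanish \emph{identically} (each $r^2(\Delta_g-\Delta_0)$ factor ends up acting on a constant), so — contrary to your remark that the expansion of $g^{ij}-\delta^{ij}$ is the main analytic input there — nothing survives from the tri-Laplacian part except the distributional term, and the entire content of the lemma sits in $r^{n}M_g(r^{6-n})-\tfrac{n-6}{2}r^6Q_g$, i.e.\ precisely the terms $\Delta\delta T_2d$, $\delta T_2d\Delta$, $\tfrac{n-2}{2}\Delta(\sigma_1(A)\Delta)$, $\delta T_4 d$ and $Q_g$ applied to $r^{6-n}$, which is exactly how the paper organizes the computation ($I_2$--$I_6$; $I_1$ gives the delta plus an $O(r^{N-n})$ error). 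So your setup is equivalent to the paper's, not a genuinely different route.

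The genuine gap is that you never carry out the computation that \emph{is} the lemma, and your deferral is circular: the ``heavy term-by-term computation'' you push to Appendix \ref{app_A} is the paper's proof of this very statement. The lemma's content is the precise coefficients $\tfrac{64(n-4)}{9}$, $\tfrac{16(3n-20)}{9(n+4)}$, $-4(5n^2-66n+224)$, $\tfrac{3n^4-16n^3-164n^2+400n+2432}{3(n+4)(n+2)n(n-1)}$ and the regrouping into harmonic brackets, and none of the inputs needed to produce them appear in your argument: (i) the Lee--Parker identity ${\rm Sym}\big(R_{kl,ij}+\tfrac29 R_{nklm}R_{nijm}\big)(p)=0$, which yields $A_{kl,ij}(p)x^ix^jx^kx^l=-\tfrac{2}{9(n-2)}\sum_{k,l}(W_{iklj}(p)x^ix^j)^2-\tfrac{1}{n-2}\sigma_1(A)_{,ij}(p)x^ix^j r^2$ and drives every Weyl-quadratic term; (ii) the expansion of the Bach quadratic form $B_{ij}x^ix^j$ and of ${\rm tr}(T_4)$ at $p$ (you only gesture at ``$B_g(p)$ expressed through $W$''); (iii) the computation of $\Delta\big(\sum_{k,l}(W_{iklj}x^ix^j)^2\big)$ and of $\Delta(\sigma_1(A)_{,ij}x^ix^jr^2)$ needed both for the coefficients and for verifying harmonicity of the bracketed polynomials. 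Without these, the statement cannot be checked even in principle from your proposal; what you have is a correct framing plus a to-do list, not a proof. (Minor further points: the cross-check against the $n=10$ coefficient $\tfrac{1}{17280}$ in Proposition \ref{prop_expan_G}(d) is also circular, since that coefficient is itself derived from this lemma; and your distributional constant should be pinned down — $\Delta_0^3 r^{6-n}=-8(n-2)(n-4)(n-6)\omega_{n-1}\delta_0$ — rather than left as ``something''.)
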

Consequently, we rewrite the above equation in Lemma \ref{lem_P_g_r^{6-n}} as
$$P_g(r^{6-n})=c_n \delta_p+r^{-n}f$$
with $f=O(r^4)$.

Observe that for $i=0,1,\cdots,[\frac{m}{2}]$,
\begin{equation*}
A_\alpha\big|_{r^{2i}\mathcal{H}_{m-2i}}=(\alpha+2i)(2m-2i+\alpha+n-2)
\end{equation*}
and
$$B_\alpha\big|_{r^{2i}\mathcal{H}_{m-2i}}=2m+2\alpha+n-2,$$
then it yields
\begin{align}\label{triple_A}
& A_{2-n}A_{4-n}A_{6-n}\big|_{r^{2i}\mathcal{H}_{m-2i}}\no\\
=&(6-n+2i)(4-n+2i)(2-n+2i)(2m+4-2i)(2m+2-2i)(2m-2i).
\end{align}

We start to find a formal asymptotic solution like $G_p(x)=r^{6-n}(1+\sum_{k=1}^n \psi_k)+\varphi$ with $\psi_k \in \mathcal{P}_k$. If we can find $\bar \psi=\sum_{k=1}^n \psi_k$ such that
\begin{equation}\label{polynomial_iteration}
A_{2-n}A_{4-n}A_{6-n}\bar \psi+K_{6-n}\bar \psi+f=O(r^{n+1}),
\end{equation}
the regularity theory for elliptic equations gives that there exists a solution $\varphi \in C_{\rm loc}^{6,\alpha}$ for any $0<\alpha <1$ to
\begin{eqnarray*}
P_g(\varphi)=-r^{-n}(A_{2-n}A_{4-n}A_{6-n}\bar \psi+K_{6-n}\bar \psi+f)\in C_{\rm loc}^\alpha.
\end{eqnarray*}
Thus it only remains to seek $\bar \psi$ satisfying \eqref{polynomial_iteration} via induction.
For any nonnegative integer $k$, it is not hard to see from the definition \eqref{K_alpha} for $K_{6-n}$ that $K_{6-n} \varphi \in \mathcal{P}_{k+2}$ when $\varphi \in \mathcal{P}_k$.  We first set $\psi_1=\psi_2=\psi_3=0$ by \eqref{polynomial_iteration} and define
$$f_3=f=O(r^4).$$

\begin{case}\label{odd}
$n$ is odd.
\end{case}
If we have found $\psi_1,\cdots,\psi_k$ for $3 \leq k \leq n-1$ with $\psi_k \in \mathcal{P}_k$ and
$$f_k=A_{2-n}A_{4-n}A_{6-n}\Big(\sum_{i=1}^{k}\psi_i\Big)+K_{6-n}\Big(\sum_{i=1}^{k}\psi_i\Big)+f:=b_{k+1}+O(r^{k+2}),$$
then it follows from \eqref{triple_A} that $A_{2-n}A_{4-n}A_{6-n}$ is invertible on $\mathcal{P}_{k+1}$ for $0 \leq k \leq n-1$. Thus there exists a unique $\psi_{k+1}\in \mathcal{P}_{k+1}$ such that
$$A_{2-n}A_{4-n}A_{6-n}\psi_{k+1}+b_{k+1}=0.$$
This implies that
\begin{align*}
f_{k+1}=&A_{2-n}A_{4-n}A_{6-n}\Big(\sum_{i=1}^{k+1}\psi_i\Big)+K_{6-n}\Big(\sum_{i=1}^{k+1}\psi_i\Big)+f\\
=&f_k+A_{2-n}A_{4-n}A_{6-n}\psi_{k+1}+K_{6-n}\psi_{k+1}\\
=&O(r^{k+2}).
\end{align*}
This finishes the induction and assertion (a) follows.

\begin{case}\label{even_10+}
$n$ is even and not less than $10$.
\end{case}
Since $A_{2-n}A_{4-n}A_{6-n}$ is invertible on $\mathcal{P}_k$ for $0 \leq k \leq n-7$, by the same induction in Case~\ref{odd}, we may find $\psi_1,\cdots, \psi_{n-7}$ such that
\begin{align*}
f_{n-7}=&A_{2-n}A_{4-n}A_{6-n}\Big(\sum_{k=1}^{n-7}\psi_k\Big)+K_{6-n}\Big(\sum_{k=1}^{n-7}\psi_k\Big)+f=O(r^{n-6})\\
:=&b_{n-6}+O(r^{n-5}).
\end{align*}
Let $\psi_{n-6}^{(0)}=\alpha_{n-6}^{(0)}(x)+\beta_{n-6}^{(0)}(x)\log r$, where $\alpha_{n-6}^{(0)}(x) \in \mathcal{P}_{n-6}\setminus r^{n-6}\mathcal{H}_0$ and $\beta_{n-6}^{(0)}(x)\in r^{n-6}\mathcal{H}_0$, then it yields from \eqref{A^3_varphi_logr} that
\begin{align*}
&A_{2-n}A_{4-n}A_{6-n}\psi_{n-6}^{(0)}\\
=&A_{2-n}A_{4-n}A_{6-n}\alpha_{n-6}^{(0)}+(B_{2-n}A_{4-n}A_{6-n}+A_{2-n}B_{4-n}A_{6-n}+A_{2-n}A_{4-n}B_{6-n})\beta_{n-6}^{(0)}.
\end{align*}
Notice that for $0 \leq i \leq \frac{n-8}{2}$, there hold
\begin{eqnarray*}
A_{2-n}A_{4-n}A_{6-n}\big|_{r^{2i}\mathcal{H}_{m-2i}}\neq 0
\end{eqnarray*}
by \eqref{triple_A} and
$$(B_{2-n}A_{4-n}A_{6-n}+A_{2-n}B_{4-n}A_{6-n}+A_{2-n}A_{4-n}B_{6-n})|_{r^{n-6}\mathcal{H}_0}=8(n-2)(n-4)(n-6)\neq 0.$$
Hence there exists a unique $\psi_{n-6}^{(0)}\in \mathcal{P}_{n-6}+\mathcal{P}_{n-6}\log r$ to
$$A_{2-n}A_{4-n}A_{6-n}\psi_{n-6}^{(0)}+b_{n-6}=0.$$
This indicates that
\begin{align*}
f_{n-6}=&f_{n-7}+A_{2-n}A_{4-n}A_{6-n}\psi_{n-6}^{(0)}+K_{6-n}\psi_{n-6}^{(0)}\\
=&O(r^{n-5})+\Big(K_{6-n}\beta_{n-6}^{(0)}\Big) \log r\\
:=&b_{n-5}+O(r^{n-4}) \log r+O(r^{n-4}).
\end{align*}
Let $\psi_{n-5}^{(0)}=\alpha_{n-5}^{(0)}+\beta_{n-5}^{(0)}\log r$, where $\alpha_{n-5}^{(0)} \in \mathcal{P}_{n-5}\setminus r^{n-6}\mathcal{H}_1$ and $\beta_{n-5}^{(0)}\in r^{n-6}\mathcal{H}_1$, then it yields
\begin{align*}
&A_{2-n}A_{4-n}A_{6-n}\psi_{n-5}^{(0)}\\
=&A_{2-n}A_{4-n}A_{6-n}\alpha_{n-5}^{(0)}+(B_{2-n}A_{4-n}A_{6-n}+A_{2-n}B_{4-n}A_{6-n}+A_{2-n}A_{4-n}B_{6-n})\beta_{n-5}^{(0)}.
\end{align*}
By similar arguments, there exists a unique $\psi_{n-5}^{(0)}\in \mathcal{P}_{n-5}+r^{n-6}\mathcal{H}_1\log r$ such that
$$A_{2-n}A_{4-n}A_{6-n}\psi_{n-5}^{(0)}+b_{n-5}=0.$$
This implies that
\begin{align*}
f_{n-5}=&f_{n-6}+A_{2-n}A_{4-n}A_{6-n}\psi_{n-5}^{(0)}+K_{6-n}\psi_{n-5}^{(0)}\\
=&O(r^{n-4})\log r+O(r^{n-4}):=b_{n-4}^{(1)}\log r+O(r^{n-4})+O(r^{n-3})\log r.
\end{align*}
Choose $\psi_{n-4}^{(1)}=\alpha_{n-4}^{(1)}\log r+\beta_{n-4}^{(1)}\log^2 r\in \mathcal{P}_{n-4}\log r+(r^{n-6}\mathcal{H}_2+r^{n-4}\mathcal{H}_0)\log^2 r$, then \eqref{A^3_varphi_logr} gives
\begin{align*}
&A_{2-n}A_{4-n}A_{6-n}\psi_{n-4}^{(1)}\\
=&\Big[A_{2-n}A_{4-n}A_{6-n}\alpha_{n-4}^{(1)}+2(B_{6-n}A_{4-n}A_{2-n}+A_{6-n}B_{4-n}A_{2-n}+A_{6-n}A_{4-n}B_{2-n})\beta_{n-4}^{(1)}\Big]\log r\\
&+A_{2-n}A_{4-n}A_{6-n}\beta_{n-4}^{(1)}\log^2 r+O(r^{n-4}).
\end{align*}
Since
\begin{align*}
&(B_{6-n}A_{4-n}A_{2-n}+A_{6-n}B_{4-n}A_{2-n}+A_{6-n}A_{4-n}B_{2-n})|_{r^{n-6}\mathcal{H}_2}=8(n+2)n(n-2)\neq 0;\\
&(B_{6-n}A_{4-n}A_{2-n}+A_{6-n}B_{4-n}A_{2-n}+A_{6-n}A_{4-n}B_{2-n})|_{r^{n-4}\mathcal{H}_0}=-4n(n-2)(n-4)\neq 0
\end{align*}
and $A_{2-n}A_{4-n}A_{6-n}|_{r^{2i}\mathcal{H}_{n-4-2i}}\neq 0$ for $0 \leq i \leq \frac{n-8}{2}$, then there exists a unique $\psi_{n-4}^{(1)}$ such that
$$A_{2-n}A_{4-n}A_{6-n}\alpha_{n-4}^{(1)}+2(B_{6-n}A_{4-n}A_{2-n}+A_{6-n}B_{4-n}A_{2-n}+A_{6-n}A_{4-n}B_{2-n})\beta_{n-4}^{(1)}+b_{n-4}^{(1)}=0$$
and
\begin{align*}
f_{n-4}^{(1)}=&f_{n-5}+A_{2-n}A_{4-n}A_{6-n}\psi_{n-4}^{(1)}+K_{6-n}\psi_{n-4}^{(1)}\\
=&O(r^{n-4})+O(r^{n-3})\log r+O(r^{n-2})\log^2 r\\
:=&b_{n-4}^{(0)}+O(r^{n-3})\log r+O(r^{n-3})+O(r^{n-2})\log^2 r.
\end{align*}
Choose $\psi_{n-4}^{(0)} \in \mathcal{P}_{n-4}+(r^{n-6}\mathcal{H}_2+r^{n-4}\mathcal{H}_0)\log r$ to remove the term $b_{n-4}^{(0)}$ and set
\begin{align*}
f_{n-4}^{(0)}=&f_{n-4}^{(1)}+A_{2-n}A_{4-n}A_{6-n}\psi_{n-4}^{(0)}+K_{6-n}\psi_{n-4}^{(0)}\\
=&O(r^{n-3})\log r+O(r^{n-3})+O(r^{n-2})\log^2 r.
\end{align*}
By similar arguments and \eqref{A^3_varphi_logr}, we get
\begin{align*}
&\psi_{n-3}^{(1)}\in \mathcal{P}_{n-3}\log r+(r^{n-6}\mathcal{H}_3+r^{n-4}\mathcal{H}_1)\log^2 r;\\
&\psi_{n-3}^{(0)} \in \mathcal{P}_{n-3}+(r^{n-6}\mathcal{H}_3+r^{n-4}\mathcal{H}_1)\log r;\\
&\psi_{n-2}^{(i)}\in\mathcal{P}_{n-2}\log^{i}r+(r^{n-6}\mathcal{H}_4+r^{n-4}\mathcal{H}_2+r^{n-2}\mathcal{H}_0)\log^{i+1}r, \hbox{~~for~~}i=0,1,2;\\
&\psi_{n-1}^{(i)}\in \mathcal{P}_{n-1}\log^{i}r+(r^{n-6}\mathcal{H}_5+r^{n-4}\mathcal{H}_3+r^{n-2}\mathcal{H}_1)\log^{i+1}r, \hbox{~~for~~}i=0,1,2;\\
&\psi_n^{(i)}~~\in \mathcal{P}_n\log^{i}r+(r^{n-6}\mathcal{H}_6+r^{n-4}\mathcal{H}_4+r^{n-2}\mathcal{H}_2)\log^{i+1}r, \hbox{~~for~~}i=0,1,2,3.
\end{align*}
Now we set
$$\psi_{n-6}=\psi_{n-6}^{(0)},\psi_{n-5}=\psi_{n-5}^{(0)},\psi_{n-4}=\psi_{n-4}^{(0)}+\psi_{n-4}^{(1)},\psi_{n-3}=\psi_{n-3}^{(0)}+\psi_{n-3}^{(1)}$$
and
$$
\psi_{n-2}=\sum_{i=0}^2\psi_{n-2}^{(i)},~~\psi_{n-1}=\sum_{i=0}^2\psi_{n-1}^{(i)},\psi_{n}=\sum_{i=0}^3\psi_n^{(i)}.
$$
Eventually, we obtain
\begin{align*}
f_n=&A_{2-n}A_{4-n}A_{6-n}\Big(\sum_{k=1}^n\psi_k\Big)+K_{6-n}\Big(\sum_{k=1}^n\psi_k\Big)+f\\
=&O(r^{n+1})(\log^3 r+\log^2 r+\log r+1)+O(r^{n+2})\log^4 r.
\end{align*}
Hence, $r^{-n}f_n \in C^\alpha$ for any $0<\alpha<1$. This finishes the induction and we obtain assertion (b) as desired.

\begin{case}\label{8D}
$n=8.$
\end{case}
Notice that
$$P_g(G_p(x)-c_nr^{-2})=O(r^{-4})\in L^{p},$$
for some $\frac{8}{5} <p<2$, then it follows from regularity theory of elliptic equations that $G_p(x)-c_nr^{-2} \in C_{\rm loc}^{6-\frac{8}{p}}$. From this, we have
$$G_p(x)=c_n r^{-2}+A+O(r).$$

\begin{case}\label{lcf}
$M$ is locally conformal flat.
\end{case}
One may choose $g$ flat near $p$ and $P_g=-\Delta_0^3$. Hence, one has
$P_g(G(x)-c_n r^{6-n})=0$ and then $G_p(x)-c_n r^{6-n}$ is smooth near $p$.

Therefore, the assertion $(c)$ follows from cases \ref{odd},\ref{8D},\ref{lcf}. In some special cases, the leading term $\psi_4$ can be computed with the help of  Lemma \ref{lem_P_g_r^{6-n}}. The proof of Proposition \ref{prop_expan_G} is complete.

\section{$\mathbf{n \geq10}$ and not locally conformally flat}\label{sec3}

Similar to the Yamabe constant, for $n \geq 3$ and $n \neq 4,6$, we define
$$Y_6^+(M,g)=\inf_{0<u \in H^3(M,g)}\frac{\int_M u P_g u d\mu_g}{\Big(\int_M u^{\frac{2n}{n-6}}d\mu_g\Big)^{\frac{n-6}{n}}}.$$
It follows from \eqref{conf_invar_P_g} that $Y_6^+(M,g)$ is a conformal invariant. However, due to the lack of a maximum principle for higher order elliptic equations in general, we first study another conformally invariant quantity
$$Y_6(M,g)=\inf_{u \in H^3(M,g)\setminus\{0\}}\frac{\int_M u P_g u d\mu_g}{\Big(\int_M |u|^{\frac{2n}{n-6}}d\mu_g\Big)^{\frac{n-6}{n}}}.$$
In particular, we have $Y_6(S^n)=Y_6^+(S^n)=\frac{n-6}{2}Q_{S^n}\omega_n^{\frac{6}{n}}$. For $w \in C_c^\infty(\mathbb{R}^n)$, let
$$\|w\|_{\mathcal{D}^{3,2}}:=\sum\limits_{|\beta|=3}\|D^\beta w\|_{L^2(\mathbb{R}^n)}\approx\|\nabla\Delta w\|_{L^2(\mathbb{R}^n)}$$
and let $\mathcal{D}^{3,2}(\mathbb{R}^n)$ denote the completion of $C_c^\infty(\mathbb{R}^n)$ under this norm. The equivalence of the above last two norms can be easily deduced by the formula \eqref{int_Bochner_formular} below.  We first recall an optimal Euclidean Sobolev inequality (cf.  \cite[p.154-165]{lions}, \cite{lieb}).
\begin{lemma}\label{lem:sharp_Sobolev}
For $n \geq 7$, the following sharp Sobolev embedding inequality holds
$$Y_6(S^n)\Big(\int_{\mathbb{R}^n}|w|^{\frac{2n}{n-6}}dy\Big)^{\frac{n-6}{n}}\leq \int_{\mathbb{R}^n}|\nabla\Delta w|^2 dy
\text{~~for all~~} w \in \mathcal{D}^{3,2}(\mathbb{R}^n).$$
 The equality holds if and only if $w(y)=(\frac{2}
{1+|y|^2})^{\frac{n-6}{2}}$ up to any nonzero constant multiple, as well as all
translations and dilations.
\end{lemma}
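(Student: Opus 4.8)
The plan is to derive this from the sharp Hardy--Littlewood--Sobolev inequality of Lieb \cite{lieb} (with the extremals produced, if one prefers, through the concentration--compactness method of Lions \cite{lions}), and then to identify the optimal constant with $Y_6(S^n)$ by means of the conformal covariance \eqref{conf_invar_P_g}.

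First I would reduce the third-order inequality to a fractional one. For $w \in C_c^\infty(\mathbb{R}^n)$, Plancherel's theorem (the Fourier symbol of $\Delta$ being $-|\xi|^2$, so that $\nabla\Delta w$ has symbol $-i\xi|\xi|^2$) gives
$$\int_{\mathbb{R}^n}|\nabla\Delta w|^2\,dy=\int_{\mathbb{R}^n}|\xi|^6|\widehat w(\xi)|^2\,d\xi=\int_{\mathbb{R}^n}\big|(-\Delta)^{3/2}w\big|^2\,dy,$$
and the equivalence $\|\nabla\Delta w\|_{L^2}\approx\|w\|_{\mathcal{D}^{3,2}}$ is exactly \eqref{int_Bochner_formular}. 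Setting $g=(-\Delta)^{3/2}w$, so that $w=I_3 g$ is the Riesz potential of order $3$ and $\|g\|_{L^2}=\|\nabla\Delta w\|_{L^2}$, the asserted inequality becomes, by $L^2$--$L^{2n/(n-6)}$ duality, equivalent to a sharp case of the Hardy--Littlewood--Sobolev inequality; Lieb's theorem \cite{lieb} then supplies the optimal constant, call it $S_n$, together with the fact that equality is attained precisely by $w(y)=c\,(1+|y|^2)^{-(n-6)/2}$, $c\neq0$, up to translations and dilations. Alternatively, a maximizer in $\mathcal{D}^{3,2}(\mathbb{R}^n)$ can be produced by the concentration--compactness principle \cite[p.~154--165]{lions}; elliptic regularity applied to its Euler--Lagrange equation $(-\Delta)^3 w=S_n\,w^{2^\sharp-1}$, together with a Liouville-type classification of positive solutions, then pins down the same family.

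It remains to check that $S_n=Y_6(S^n)$. Let $u_0(y)=\big(\tfrac{2}{1+|y|^2}\big)^{(n-6)/2}$, so that under inverse stereographic projection $g_{S^n}=u_0^{4/(n-6)}g_{\mathbb{R}^n}$. Because $g_{\mathbb{R}^n}$ is flat, all curvature terms in \eqref{P_g} vanish and $P_{g_{\mathbb{R}^n}}=(-\Delta_0)^3$; then \eqref{conf_invar_P_g} gives $P_{g_{\mathbb{R}^n}}(u_0\varphi)=u_0^{(n+6)/(n-6)}\,P_{g_{S^n}}\varphi$ for $\varphi\in C^\infty(S^n)$, and combined with $d\mu_{g_{S^n}}=u_0^{2n/(n-6)}\,dy$ this identifies the Euclidean Rayleigh quotient at $w=u_0\varphi$ with the quotient on $S^n$ at $\varphi$. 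Taking $\varphi\equiv1$ and using $P_{g_{S^n}}1=\tfrac{n-6}{2}Q_{S^n}$ (immediate from \eqref{P_g}, since all derivative terms annihilate constants) shows that $u_0$ realizes the value $\tfrac{n-6}{2}Q_{S^n}\,\omega_n^{6/n}$ in the Euclidean functional; as $u_0$ is an extremal, $S_n=\tfrac{n-6}{2}Q_{S^n}\,\omega_n^{6/n}=Y_6(S^n)$. Finally, density of $C_c^\infty(\mathbb{R}^n)$ in $\mathcal{D}^{3,2}(\mathbb{R}^n)$ extends the inequality to all of $\mathcal{D}^{3,2}(\mathbb{R}^n)$, and the equality case is inherited from the rigidity in Lieb's inequality.

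The step I expect to be the real obstacle is the rigidity: showing that the extremals are exactly the stated bubbles, modulo the conformal symmetries (scaling, translations, dilations) of the functional. This is not elementary; it rests on Lieb's classification of Hardy--Littlewood--Sobolev maximizers, or, along the PDE route, on a Liouville-type classification of positive solutions of $(-\Delta)^3 w=w^{(n+6)/(n-6)}$ on $\mathbb{R}^n$. Everything else is routine bookkeeping with conformal factors and the transformation law \eqref{conf_invar_P_g}.
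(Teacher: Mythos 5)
Your outline is correct, but note that the paper does not prove Lemma \ref{lem:sharp_Sobolev} at all: it simply recalls it as a known sharp Sobolev inequality, citing Lieb \cite{lieb} and Lions \cite{lions}, which is exactly the content your reduction (Plancherel to $(-\Delta)^{3/2}$, duality with sharp Hardy--Littlewood--Sobolev, rigidity from Lieb's classification) reconstructs, together with the standard stereographic-projection computation identifying the optimal constant with $\tfrac{n-6}{2}Q_{S^n}\omega_n^{6/n}=Y_6(S^n)$. So your proposal is consistent with the paper's (citation-level) treatment, and your closing caveat is accurate: the only genuinely nontrivial ingredient is the classification of extremals, which is supplied by \cite{lieb} rather than by any argument in this paper.
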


\begin{proposition}\label{prop:Y_6(M)_n>9}
On a closed Riemannian manifold $(M^n,g)$ of dimension $n \geq 10$, if there exists $p \in M^n$ such that the Weyl tensor $W_g(p)\neq 0$, then $Y_6(M^n)<Y_6(S^n)$.
\end{proposition}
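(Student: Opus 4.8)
The plan is to run Aubin's test-function argument, adapted from the Yamabe and Paneitz--Branson cases (\cite{aubin,er,dhl}): I would exhibit a function concentrated at $p$ whose Rayleigh quotient for $P_g$ lies strictly below $Y_6(S^n)$. Since $Y_6$ is a conformal invariant by \eqref{conf_invar_P_g} and the condition $W_g(p)\neq0$ is preserved under conformal change, I would first replace $g$ by a conformal metric (still denoted $g$) for which the geodesic normal coordinates $x\mapsto\exp_p(x)$ are \emph{conformal normal coordinates} in the sense of Lee--Parker \cite{lp}. Then $\det g_{ij}=1+O(|x|^N)$ for $N$ as large as wanted, ${\rm Ric}_g$ together with its symmetrized covariant derivatives vanishes at $p$ up to high order, $R_g(p)=0$ and $\nabla R_g(p)=0$, the metric expands as $g_{ij}(x)=\delta_{ij}+\tfrac13 W_{ikjl}(p)x^kx^l+O(|x|^3)$, and $\Delta_g R_g(p)$ is a nonzero multiple of $|W_g(p)|^2$; thus $|W_g(p)|^2$ is the first curvature scalar that cannot be normalized away, exactly as in Lemma \ref{lem_P_g_r^{6-n}}.

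For small $\varepsilon>0$ I would take the truncated Aubin bubble $u_\varepsilon(x)=\chi(x)U_\varepsilon(x)$, where $U_\varepsilon(x)=\big(\varepsilon/(\varepsilon^2+|x|^2)\big)^{(n-6)/2}$ and $\chi$ is a fixed cutoff with $\chi\equiv1$ on $B_\delta(p)$ and ${\rm supp}\,\chi\subset B_{2\delta}(p)$; recall that $U_1$ realizes equality in Lemma \ref{lem:sharp_Sobolev}. The denominator is the routine part: since $d\mu_g=(1+O(|x|^N))\,dx$ and the truncation only affects the annulus $|x|\sim\delta$, one gets $\int_M|u_\varepsilon|^{2^\sharp}d\mu_g=\int_{\mathbb R^n}U_1^{2^\sharp}dx+O(\varepsilon^n)$, hence $\big(\int_M|u_\varepsilon|^{2^\sharp}d\mu_g\big)^{(n-6)/n}=\big(\int_{\mathbb R^n}U_1^{2^\sharp}dx\big)^{(n-6)/n}(1+O(\varepsilon^n))$, which perturbs the quotient only at order $\varepsilon^n$, negligible (as $n\geq10$) against the correction produced in the numerator.

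The heart of the matter is the numerator $\int_M u_\varepsilon P_g u_\varepsilon\,d\mu_g$. Writing $P_g$ through its quadratic form (the integration-by-parts expression for $\int_M\varphi P_g\varphi\,d\mu_g$ obtained from \eqref{P_g}), I would substitute the conformal normal coordinate expansions of $g^{ij}$, $\sqrt{\det g}$, $\Delta_g$, $\sigma_1(A_g)$, $T_2$, $T_4$, $Q_g$ and expand in $\varepsilon$. The flat leading term $\int_{\mathbb R^n}|\nabla\Delta_0 U_\varepsilon|^2dx$ equals $Y_6(S^n)\big(\int_{\mathbb R^n}U_1^{2^\sharp}dx\big)^{(n-6)/n}$ by the equality case of Lemma \ref{lem:sharp_Sobolev}, and replacing $U_\varepsilon$ by $\chi U_\varepsilon$ costs only $O(\varepsilon^{n-6})$. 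Every contribution linear in the curvature tensor drops out --- either because the Weyl tensor, being trace-free and antisymmetric, annihilates the symmetric combinations of coordinates against which the radial bubble makes it contract, or because ${\rm Ric}_g$ and its symmetrized derivatives vanish at $p$ --- so the first surviving correction is quadratic in $W_g(p)$; it gathers contributions from the sixth-order term $|\nabla_g\Delta_g u_\varepsilon|_g^2\,d\mu_g$, from the $T_2$- and $T_4$-terms, and from the $\sigma_1(A_g)(\Delta_g u_\varepsilon)^2$-term, while the zeroth-order $Q_g$-term turns out to be of strictly lower order. A scaling count of the type already used for $P_g(r^{6-n})$ in Lemma \ref{lem_P_g_r^{6-n}} shows this correction has size $\varepsilon^4\log(1/\varepsilon)$ when $n=10$ and $\varepsilon^4$ when $n\geq11$, which is precisely why the threshold is $n\geq10$.

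I expect the main obstacle to be showing that, once all these pieces are assembled, the coefficient of this leading correction is a \emph{strictly negative} multiple of $|W_g(p)|^2$: this is a long but essentially mechanical computation of the same nature as Appendix \ref{app_A}, requiring the conformal normal coordinate expansions to fourth order, the appropriate Weyl contractions, and identities such as $\Delta_g R_g(p)$ being a multiple of $|W_g(p)|^2$, all combining with the right sign. Granting this, one obtains
$$\frac{\int_M u_\varepsilon P_g u_\varepsilon\,d\mu_g}{\big(\int_M|u_\varepsilon|^{2^\sharp}d\mu_g\big)^{(n-6)/n}}=Y_6(S^n)-\kappa_n|W_g(p)|^2\,\varepsilon^4\ell_n(\varepsilon)+o\big(\varepsilon^4\ell_n(\varepsilon)\big),$$
with $\kappa_n>0$ and $\ell_n(\varepsilon)=\log(1/\varepsilon)$ for $n=10$, $\ell_n(\varepsilon)=1$ for $n\geq11$. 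For $\varepsilon$ small the right-hand side is strictly below $Y_6(S^n)$, and since $u_\varepsilon\in H^3(M,g)\setminus\{0\}$ is admissible in the variational problem defining $Y_6(M^n)$, this yields $Y_6(M^n)<Y_6(S^n)$, as asserted.
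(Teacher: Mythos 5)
Your outline reproduces the paper's own strategy (conformal normal coordinates at $p$, a truncated Aubin bubble, expansion of the quadratic form of $P_g$, leading correction of size $\epsilon^4|\log\epsilon|$ for $n=10$ and $\epsilon^4$ for $n\geq 11$), and the bookkeeping you describe for the denominator and for the flat leading term is right. But the proposal has a genuine gap exactly where the proposition lives: you write that the strict negativity of the coefficient $\kappa_n$ of $|W_g(p)|^2$ is ``a long but essentially mechanical computation \dots all combining with the right sign,'' and then you grant it. That sign is not structural and cannot be waved through: the several contributions enter with competing signs. In the paper's computation the $-\frac{n-2}{2}\int_M\sigma_1(A)(\Delta\varphi)^2$ term and the $T_4$-term each contribute \emph{positive} multiples of $|W(p)|^2$ (since $\Delta_g\sigma_1(A_g)(p)=-\tfrac{1}{12(n-1)}|W(p)|^2<0$), while the $T_2$-term contributes negatively, and for $n\geq 11$ the net coefficient is obtained only after evaluating all four radial integrals by Beta functions and simplifying to
\begin{equation*}
n-10+\frac{-3n^5+2n^4+228n^3-264n^2-1760n-768}{16n(n+2)(n-3)}
=\frac{-3n^5+18n^4+52n^3-200n^2-800n-768}{16n(n+2)(n-3)}<0,
\end{equation*}
where the first summand $n-10$ is itself positive; at $n=10$ one must likewise combine three log-divergent integrals whose individual log-coefficients are $\tfrac{16}{5}$, $4$ and $-\tfrac{72}{5}$, so negativity comes from cancellation, not from any a priori sign. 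Without carrying out this evaluation (or an equivalent one), the claimed expansion with $\kappa_n>0$ is an assumption, and the conclusion $Y_6(M^n)<Y_6(S^n)$ does not follow --- precisely as in the Yamabe and Paneitz settings, where in unfavorable dimensions the local test-function coefficient fails to have the needed sign and local arguments alone are insufficient. To complete the proof along your (and the paper's) route you must compute the four leading integrals explicitly and verify the negative sign for every $n\geq 10$.
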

\begin{proof}
Recall the definition of $P_g$:
$$-P_g=\Delta_g^3+\Delta_g \delta T_2 d+\delta T_2 d \Delta_g+\frac{n-2}{2}\Delta_g(\sigma_1(A)\Delta_g)+\delta T_4 d-\frac{n-6}{2}Q_g,$$
then for all $\varphi\in H^3(M,g)$,
\begin{align*}
\int_M \varphi P_g\varphi d\mu_g =& \int_M|\nabla\Delta\varphi|_g^2d\mu_g-2\int_MT_2(\nabla\varphi,\nabla\Delta\varphi)d\mu_g-\frac{n-2}{2}\int_M\sigma_1(A)(\Delta\varphi)^2d\mu_g  \\
                       & -\int_MT_4(\nabla\varphi,\nabla\varphi)d\mu_g+\frac{n-6}{2}\int_MQ_g\varphi^2d\mu_g.                    
\end{align*}
Fix $\rho>0$ small and choose test functions
$$\varphi(x)=\eta_\rho(x) u_\epsilon(x), \quad u_\epsilon(x)=\Big(\frac{2\epsilon}{\epsilon^2+|x|^2}\Big)^\frac{n-6}{2}, \quad \epsilon>0,$$
where $r=|x|=d_g(x,p)$ and
$$\eta_\rho\in C^\infty_c,~0\leq\eta_\rho\leq1,\quad \eta_\rho\equiv 1\hbox{~~in~~}B_\rho\hbox{~~and~~~}\eta_\rho\equiv 0\hbox{~~in~~}B_{2\rho}^c.$$

It is known from Lee-Park \cite{lp} that up to a conformal factor, under conformal normal coordinates around $p$ with metric $g$, for all $N \geq 5$ there hold
$$\sigma_1(A_g)(p)=0, \; \sigma_1 (A_g)_{,i}(p)=0, \; \Delta_g \sigma_1(A_g)(p)=-\frac{|W(p)|_g^2}{12(n-1)}$$
and
$$\sqrt{\det g}=1+O(r^N).$$

Our purpose is to estimate
$$\int_M \varphi P_g \varphi d\mu_g \hbox{~~and~~} \int_M \varphi^{\frac{2n}{n-6}} d\mu_g.$$
A direct computation shows
\begin{eqnarray*}
u_\epsilon'= -(n-6)u_\epsilon\frac{r}{\epsilon^2+r^2}, \quad u_\epsilon'' = -(n-6)u_\epsilon\frac{\epsilon^2-(n-5)r^2}{(\epsilon^2+r^2)^2}
\end{eqnarray*}
and
\begin{align*}
\Delta_0u_\epsilon =& -(n-6)\frac{u_\epsilon}{(\epsilon^2+r^2)^2}(n\epsilon^2+4r^2), \\
(\Delta_0u_\epsilon)' =& (n-6)(n-4)\frac{u_\epsilon r}{(\epsilon^2+r^2)^3}\Big[(n+2)\epsilon^2+4r^2\Big].
\end{align*}
We start with $\int_M|\nabla\Delta\varphi|_g^2d\mu_g$ and divide its integral into two parts $\int_M=\int_{B_\rho}+\int_{M\setminus \overline{B_\rho}}$. Compute with
\begin{align*}
&\int_{B_\rho}|\nabla\Delta\varphi|_g^2d\mu_g =\int_{B_\rho}g^{ij}(\Delta\varphi)_{,i}(\Delta\varphi)_{,j}d\mu_g \\
&= \int_{B_\rho}(\delta^{ij}+O(r^2))(\Delta_0\varphi+O(r^{N-1})\varphi')_{,i}(\Delta_0\varphi+O(r^{N-1})\varphi')_{,j}(1+O(r^N))dx \\
&= \int_{B_\rho}|(\nabla\Delta)_0\varphi|^2dx+\int_{B_\rho}(\Delta_0\varphi)'(O(r^{N-2})\varphi'+O(r^{N-1})\varphi'')dx
\end{align*}
and
\begin{align*}
\int_{\mathbb{R}^n\setminus \overline{B_\rho}}|(\nabla\Delta)_0\varphi|^2dx
=&(n-6)^2(n-4)^2 \int_{\mathbb{R}^n\setminus \overline{B_\rho}}\frac{u_\epsilon^2r^2}{(\epsilon^2+r^2)^6}\Big[(n+2)\epsilon^2+4r^2\Big]^2dx \\
\leq& C\int_{\rho/\epsilon}^\infty\sigma^{5-n}d\sigma=O\big(\epsilon^{n-6}\big).
\end{align*}
Similarly, we estimate
$$\int_{M\setminus \overline{B_\rho}} |\nabla\Delta\varphi|_g^2 d\mu_g=O(\epsilon^{n-6}).$$
Thus, we obtain
$$\int_M|\nabla\Delta\varphi|_g^2d\mu_g=\int_{\mathbb{R}^n}|\nabla\Delta_0u_\epsilon|^2dx+O\big(\epsilon^{n-6}\big).$$
Secondly, we compute
\begin{align*}
&\int_{B_\rho}\sigma_1(A)(\Delta\varphi)^2d\mu_g\\
=& \int_{B_\rho}(\frac{1}{2}\sigma_1(A)_{,ij}(p)x^ix^j+O(r^3))(\Delta_0\varphi+O(r^{N-1})\varphi')^2(1+O(r^N))dx \\
=& \int_{B_\rho}\frac{1}{2n}\Delta\sigma_1(A)(p)|x|^2(\Delta_0\varphi)^2dx+\int_{B_\rho} O(r^3)\frac{u_\epsilon^2}{(\epsilon^2+r^2)^4}(n\epsilon^2+4r^2)^2dx \\
=& -\frac{(n-6)^2|W(p)|^2}{24n(n-1)}\omega_{n-1}\int_0^\rho\frac{(n\epsilon^2+4r^2)^2}{(\epsilon^2+r^2)^4}u_\epsilon^2 r^{n+1}dr+  \int_{B_\rho}\frac{O(r^3)u_\epsilon^2}{(\epsilon^2+r^2)^2}dx
\end{align*}
and for some large enough $N$
\begin{align*}
\int_{B_{2\rho}\setminus\overline{B_\rho}}\sigma_1(A)(\Delta\varphi)^2d\mu_g
\leq&C\int_{B_{2\rho}\setminus\overline{B_\rho}}|\Delta_0\varphi+O(r^{N-1})\varphi'|^2(1+O(r^N))dx \\
\leq&C\int_{B_{2\rho}\setminus\overline{B_\rho}}\Big[(\Delta_0\varphi)^2+O(r^{2(N-1)})|\varphi'|^2\Big]dx \\
\leq&C\int_{B_{2\rho}\setminus\overline{B_\rho}}\Big(u_\epsilon\Delta_0\eta_\rho+2\nabla u_\epsilon\cdot\nabla\eta_\rho+\eta_\rho\Delta_0 u_\epsilon\Big)^2dx+O(\epsilon^{n-6})\\
\leq&C\int_\rho^{2\rho}\frac{(n\epsilon^2+4r^2)^2}{(\epsilon^2+r^2)^4}u_\epsilon^2r^{n-1}dr+O(\epsilon^{n-6})\\
\stackrel{\sigma=\frac{r}{\epsilon}}{\leq}&C\epsilon^2\int_{\rho/\epsilon}^{2\rho/\epsilon}\frac{(n+4\sigma^2)^2\sigma^{n-1}}{(1+\sigma^2)^{n-2}}d\sigma+O(\epsilon^{n-6})\\
\leq&C\epsilon^2(\frac{\rho}{\epsilon})^{8-n}+O(\epsilon^{n-6})=O(\epsilon^{n-6}).
\end{align*}
Observe that
\begin{equation}\label{hot_est}
\int_{B_\rho}\frac{r^3u_\epsilon^2}{(\epsilon^2+r^2)^2}dx\\
= \left \{\begin{array}{ll}
O(\epsilon^4) &\hbox{~~if~~} n=10,\\
O(\epsilon^5 |\log \epsilon|) &\hbox{~~if~~} n=11,\\
O(\epsilon^5) &\hbox{~~if~~} n\geq 12.
\end{array}
\right.
\end{equation}
Hence,
\begin{align*}
& -\frac{n-2}{2}\int_M\sigma_1(A)(\Delta\varphi)^2d\mu_g \\
=& \frac{(n-6)^2(n-2)|W(p)|^2}{48n(n-1)}\omega_{n-1}\int_0^\rho\frac{(n\epsilon^2+4r^2)^2}{(\epsilon^2+r^2)^4}u_\epsilon^2r^{n+1}dr+\left \{\begin{array}{ll}
O(\epsilon^4) &\hbox{~~if~~} n=10,\\
O(\epsilon^5 |\log \epsilon|) &\hbox{~~if~~} n=11,\\
O(\epsilon^5) &\hbox{~~if~~} n\geq 12.
\end{array}
\right.
\end{align*}
Thirdly, we compute $\int_MT_2(\nabla\varphi,\nabla\Delta\varphi)d\mu_g$.
$$\int_{B_\rho}T_2(\nabla\varphi,\nabla\Delta\varphi)d\mu_g=\int_{B_\rho}\Big[(n-2)\sigma_1(A)\langle\nabla\varphi,\nabla\Delta\varphi\rangle-8A_{ij}\varphi_{,i}(\Delta\varphi)_{,j}\Big]d\mu_g.$$
Observe that $u_{\epsilon,i}=\frac{x^i}{r}u_\epsilon'$ and $(\Delta_0u_\epsilon)_{,i}=\frac{x^i}{r}(\Delta_0u_\epsilon)'$. Then we get
\begin{align*}
&(n-2)\int_{B_\rho}\sigma_1(A)\langle\nabla\varphi,\nabla\Delta\varphi\rangle d\mu_g \\
=& (n-2)\int_{B_\rho}(\frac{1}{2}\sigma_1(A)_{,ij}(p)x^ix^j+O(r^3))g^{kl}\varphi_{,k}(\Delta\varphi)_{,l}d\mu_g \\
=& (n-2)\int_{B_\rho}(\frac{1}{2}\sigma_1(A)_{,ij}(p)x^ix^j+O(r^3))(\delta^{kl}+O(r^2))\varphi_{,k}(\Delta_0\varphi+O(r^{N-1})\varphi')_{,l}d\mu_g \\
=& \frac{n-2}{2}\int_{B_\rho}\frac{1}{n}\Delta\sigma_1(A)(p)|x|^2\varphi_{,i}(\Delta_0\varphi)_{,i}dx+\int_{B_\rho}O(r^3)|\varphi'||(\Delta_0 \varphi)'|dx \\
=& -\frac{(n-2)|W(p)|^2}{24n(n-1)}\int_{B_\rho}\Big\{-(n-6)^2(n-4)\frac{u_\epsilon^2 r^4}{(\epsilon^2+r^2)^4}\Big[(n+2)\epsilon^2+4r^2\Big]\Big\}dx\\
&+\int_{B_\rho}\frac{O(r^3)u_\epsilon^2}{(\epsilon^2+r^2)^2}dx\\
=& \frac{(n-2)(n-4)(n-6)^2}{24n(n-1)}|W(p)|^2\int_{B_\rho}\frac{r^4}{(\epsilon^2+r^2)^4}u_\epsilon^2\Big[(n+2)\epsilon^2+4r^2\Big]dx\\
&+\int_{B_\rho}\frac{O(r^3)u_\epsilon^2}{(\epsilon^2+r^2)^2}dx,
\end{align*}
and
\begin{align*}
& -8\int_{B_\rho}A_{ij}\varphi_{,i}(\Delta\varphi)_{,j}d\mu_g \\
=& -8\int_{B_\rho}\Big(A_{ij,k}(p)x^k+\frac{1}{2}A_{ij,kl}(p)x^kx^l+O(r^3)\Big)\varphi_{,i}(\Delta_0\varphi+O(r^{N-1})\varphi')_{,j}d\mu_g \\
=& -4\int_{B_\rho}A_{ij,kl}(p)x^kx^lx^ix^j\Big[-(n-4)(n-6)^2\frac{u_\epsilon^2}{(\epsilon^2+r^2)^4}[(n+2)\epsilon^2+4r^2]\Big]dx \\
&+\int_{B_\rho}O(r^3)|\varphi'||(\Delta_0 \varphi)'|dx \\
=& 4(n-4)(n-6)^2\int_{B_\rho}\Big[-\frac{2}{9}\frac{1}{n-2}\sum_{k,l}(W_{iklj}(p)x^ix^j)^2-\frac{\sigma_1(A)_{,ij}(p)x^ix^jr^2}{n-2}\Big] \\
& \times\frac{u_\epsilon^2}{(\epsilon^2+r^2)^4}[(n+2)\epsilon^2+4r^2]dx +\int_{B_\rho}\frac{O(r^3)u_\epsilon^2}{(\epsilon^2+r^2)^2}dx\\
=& -\frac{8(n-4)(n-6)^2}{9(n-2)}\int_{B_\rho}\Big[\sum_{k,l}(W_{iklj}(p)W_{sklt}(p)x^ix^jx^sx^t)\frac{u_\epsilon^2}{(\epsilon^2+r^2)^4}[(n+2)\epsilon^2+4r^2]dx \\
& -\frac{4(n-4)(n-6)^2}{n(n-2)}\int_{B_\rho}\frac{\Delta\sigma_1(A)(p)r^4}{(\epsilon^2+r^2)^4}u_\epsilon^2[(n+2)\epsilon^2+4r^2]dx+\int_{B_\rho}\frac{O(r^3)u_\epsilon^2}{(\epsilon^2+r^2)^2}dx \\
=& -\frac{(n-4)(n-6)^2}{(n-1)n(n+2)} \omega_{n-1}|W(p)|^2\int_0^\rho\frac{r^{n+3}u_\epsilon^2}{(\epsilon^2+r^2)^4}[(n+2)\epsilon^2+4r^2]dr\\
&+\int_{B_\rho}\frac{O(r^3)u_\epsilon^2}{(\epsilon^2+r^2)^2}dx,
\end{align*}
where the last identity follows from
\begin{align*}
& \sum_{k,l}W_{iklj}(p)W_{sklt}(p)\int_{B_\rho}x^ix^jx^sx^t\frac{u_\epsilon^2}{(\epsilon^2+r^2)^4}[(n+2)\epsilon^2+4r^2]dx \\
=& \sum_{k,l}W_{iklj}(p)W_{sklt}(p)\int_{\mathbb{S}^{n-1}}\xi^i\xi^j\xi^s\xi^td\mu_{\mathbb{S}^{n-1}}
    \int_0^\rho r^{n+3}\frac{u_\epsilon^2}{(\epsilon^2+r^2)^4}[(n+2)\epsilon^2+4r^2]dr \\
=& \frac{\omega_{n-1}}{n(n+2)}
\sum_{k,l}W_{iklj}(p)W_{sklt}(p)[\delta_{ij}\delta_{st}+\delta_{is}\delta_{jt}+\delta_{it}\delta_{js}] \int_0^\rho \frac{r^{n+3} u_\epsilon^2}{(\epsilon^2+r^2)^4}[(n+2)\epsilon^2+4r^2]dr \\
=& \frac{\omega_{n-1}}{n(n+2)}\Big[|W(p)|^2+W_{iklj}(p)W_{jkli}(p)\Big]\int_0^\rho r^{n+3}\frac{u_\epsilon^2}{(\epsilon^2+r^2)^4}[(n+2)\epsilon^2+4r^2]dr \\
=& \frac{3}{2}\frac{\omega_{n-1}}{n(n+2)}|W(p)|^2\int_0^\rho r^{n+3}\frac{u_\epsilon^2}{(\epsilon^2+r^2)^4}[(n+2)\epsilon^2+4r^2]dr.
\end{align*}
Then we have
\begin{align*}
& -2\int_{B_\rho} T_2(\nabla\varphi,\nabla\Delta\varphi)d\mu_g \\
=& -\frac{(n^2-28)(n-4)(n-6)^2}{12n(n-1)(n+2)}|W(p)|^2\omega_{n-1}\int_0^\rho r^{n+3}\frac{u_\epsilon^2}{(\epsilon^2+r^2)^4}[(n+2)\epsilon^2+4r^2]dr\\
&+\int_{B_\rho}\frac{O(r^3)u_\epsilon^2}{(\epsilon^2+r^2)^2}dx.
\end{align*}
By a similar argument, one has
\begin{align*}
\Big|\int_{B_{2\rho}\setminus \overline{B_\rho}}T_2(\nabla \varphi, \nabla \Delta \varphi)\Big|\leq&C\int_{B_{2\rho}\setminus \overline{B_\rho}}|\nabla \varphi||\nabla \Delta \varphi|d\mu_g\\
\leq&C\int_{B_{2\rho}\setminus \overline{B_\rho}}|u_\epsilon'||(\Delta u_\epsilon)'|dx+O(\epsilon^{n-6})=O(\epsilon^{n-6}).
\end{align*}
Fourthly, we compute $\int_MT_4(\nabla\varphi,\nabla\varphi)d\mu_g$.
\begin{align*}
&(n-6)\int_{B_\rho}\Delta\sigma_1(A)|\nabla\varphi|_g^2d\mu_g\\
=& (n-6)\int_{B_\rho}(\Delta\sigma_1(A)(p)+O(r))(|\varphi'|^2+O(r^2)|\varphi|^2)dx \\
=& -(n-6)^3\frac{|W(p)|^2}{12(n-1)}\int_{B_\rho}\frac{u_\epsilon^2r^2}{(\epsilon^2+r^2)^2}dx+\int_{B_\rho}\frac{O(r^3)u_\epsilon^2}{(\epsilon^2+r^2)^2}dx.
\end{align*}
Using (\ref{quadratic_B}), we get
\begin{align*}
& -\frac{16}{n-4}\int_{B_\rho}B_{ij}\varphi_{,i}\varphi_{,j}d\mu_g=-\frac{16}{n-4}\int_{B_\rho}(n-6)^2u_\epsilon^2\frac{B_{ij}x^ix^j}{(\epsilon^2+r^2)^2}dx \\
=& -\frac{16(n-6)^2}{n-4}\int_{B_\rho}\Big[ -\frac{2}{9}\frac{1}{n-2}\sum_{k,l,s}[(W_{ikls}(p)+W_{ilks}(p))x^i]^2\\
&+\frac{1}{12(n-2)(n-1)}|W(p)|^2r^2-\frac{7n-8}{n-2}\sigma_1(A)_{,ij}(p)x^ix^j+O(r^3)\Big]\frac{u_\epsilon^2}{(\epsilon^2+r^2)^2}dx \\
=& -\tfrac{16(n-6)^2}{n-4}[ -\tfrac{2}{3n(n-2)}+\tfrac{1}{12(n-2)(n-1)}+\tfrac{7n-8}{12(n-2)(n-1)n}]|W(p)|^2\int_{B_\rho}\frac{r^2u_\epsilon^2}{(\epsilon^2+r^2)^2}dx\\
&+\int_{B_\rho}\frac{O(r^3)u_\epsilon^2}{(\epsilon^2+r^2)^2}dx\\
=& \int_{B_\rho}\frac{O(r^3)u_\epsilon^2}{(\epsilon^2+r^2)^2}dx,
\end{align*}
where the second identity follows from
$$\sum_{i,k,l,s}(W_{ikls}(p)+W_{ilks}(p))^2=2|W(p)|^2+2\sum_{i,k,l,s}W_{ikls}(p)W_{ilks}(p)=3|W(p)|^2,$$
in view of
\begin{align*}
0=& W_{ikls}(W_{ilks}+W_{iksl}+W_{islk}) \\
  =& W_{ikls}W_{ilks}+W_{ikls}W_{iksl}+W_{ikls}W_{islk}\\
  =& 2W_{ikls}W_{ilks}-|W|^2 \qquad\hbox{~~at~~} p.
\end{align*}
Also we have
$$\int_{B_{2\rho}\setminus \overline{B_\rho}}T_4(\nabla\varphi,\nabla\varphi)d\mu_g\leq C\int_{B_{2\rho}\setminus \overline{B_\rho}}|\nabla \varphi|_g^2 d\mu_g=O(\epsilon^{n-6}).$$
Hence, collecting the above terms together with \eqref{hot_est}, we obtain
\begin{align*}
&-\int_MT_4(\nabla\varphi,\nabla\varphi)d\mu_g\\
=&-(n-6)\int_{B_\rho}\Delta\sigma_1(A)|\nabla\varphi|_g^2d\mu_g+\frac{16}{n-4}\int_{B_\rho}B_{ij}\varphi_{,i}\varphi_{,j}d\mu_g+O(\epsilon^{n-6})\\
=&(n-6)^3\frac{|W(p)|^2}{12(n-1)}\int_{B_\rho}\frac{u_\epsilon^2r^2}{(\epsilon^2+r^2)^2}dx+\left \{\begin{array}{ll}
O(\epsilon^4) &\hbox{~~if~~} n=10,\\
O(\epsilon^5 |\log \epsilon|) &\hbox{~~if~~} n=11,\\
O(\epsilon^5) &\hbox{~~if~~} n\geq 12.
\end{array}
\right.
\end{align*}
Finally, we compute $\frac{n-6}{2}\int_MQ_g\varphi^2d\mu_g$. By the definition \eqref{Q_g^6} of $Q_g$, integration by parts gives
\begin{align*}
&\frac{n-6}{2}\int_MQ_g\varphi^2d\mu_g\\
=&\frac{n-6}{2}\int_M\Delta^2\sigma_1(A)\varphi^2d\mu_g+\int_{B_\rho}\frac{O(r^3)u_\epsilon^2}{(\epsilon^2+r^2)^2}dx+O(\epsilon^{n-6})\\
=&\frac{n-6}{2}\int_M\Delta\sigma_1(A)\Delta\varphi^2d\mu_g+\int_{B_\rho}\frac{O(r^3)u_\epsilon^2}{(\epsilon^2+r^2)^2}dx+O(\epsilon^{n-6})\\
=& -\tfrac{(n-6)^2|W(p)|^2}{12(n-1)}\omega_{n-1}\int_0^\rho\frac{u_\epsilon^2r^{n-1}}{(\epsilon^2+r^2)^2}[(n-10)r^2-n\epsilon^2]dr+\left \{\begin{array}{ll}
O(\epsilon^4) &\hbox{if~~} n=10,\\
O(\epsilon^5 |\log \epsilon|) &\hbox{if~~} n=11,\\
O(\epsilon^5) &\hbox{if~~} n\geq 12,
\end{array}
\right.
\end{align*}
by \eqref{hot_est}, where the last identity follows from
\begin{align*}
& \frac{n-6}{2}\int_{B_\rho}\Delta\sigma_1(A)\Delta\varphi^2d\mu_g \\
=& \frac{n-6}{2}\int_{B_\rho}(\Delta\sigma_1(A)(p)+O(r))(\Delta_0\varphi^2+O(r^{N-1})(\varphi^2)')dx \\
=& \frac{n-6}{2}\Delta\sigma_1(A)(p)\int_{B_\rho}2(\varphi\Delta_0\varphi+|\nabla\varphi|_0^2)dx+\int_{B_\rho}\frac{O(r)u_\epsilon^2}{\epsilon^2+r^2}dx \\
=& -\frac{(n-6)^2|W(p)|^2}{12(n-1)}\omega_{n-1}\int_0^\rho\frac{u_\epsilon^2r^{n-1}}{(\epsilon^2+r^2)^2}[(n-10)r^2-n\epsilon^2]dr+\int_{B_\rho}\frac{O(r)u_\epsilon^2}{\epsilon^2+r^2}dx
\end{align*}
and the first identity follows from
\begin{eqnarray*}
\Big|\int_{B_{2\rho}\setminus\overline{B_\rho}}Q_g \varphi^2 d\mu_g\Big|\leq C \int_{B_{2\rho}\setminus\overline{B_\rho}} u_\epsilon^2 dx=O(\epsilon^{n-6}).
\end{eqnarray*}

Therefore collecting all the above terms together, we obtain
$$\int_M \varphi P_g \varphi d\mu_g=\int_{\mathbb{R}^n}|\nabla\Delta_0 u_\epsilon|^2 dx+A_{n,\rho,\epsilon}|W(p)|^2 \omega_{n-1}+O(\epsilon^{\min\{n-6,5\}}),$$
where $A_{n,\rho,\epsilon}$ is a constant given by
\begin{align*}
 & (n-6)^2\Big\{\frac{n-2}{48n(n-1)}\int_0^\rho\frac{(n\epsilon^2+4r^2)^2}{(\epsilon^2+r^2)^4}u_\epsilon^2r^{n+1}dr
    +\frac{n-6}{12(n-1)}\int_0^\rho\frac{u_\epsilon^2r^{n+1}}{(\epsilon^2+r^2)^2}dr \\
& -\frac{1}{12(n-1)}\int_0^\rho\frac{u_\epsilon^2r^{n-1}}{(\epsilon^2+r^2)^2}[(n-10)r^2-n\epsilon^2]dr \\
& -\frac{(n^2-28)(n-4)}{12n(n-1)(n+2)}\int_0^\rho r^{n+3}\frac{u_\epsilon^2}{(\epsilon^2+r^2)^4}[(n+2)\epsilon^2+4r^2]dr\Big\} \\
=& 2^{n-6}\frac{(n-6)^2}{12(n-1)}\epsilon^4\Big\{\frac{n-2}{4n}\int_0^{\rho/\epsilon}\frac{(n+4\sigma^2)^2}{(1+\sigma^2)^4}(1+\sigma^2)^{-(n-6)}\sigma^{n+1}d\sigma \\
& +(n-6)\int_0^{\rho/\epsilon}\frac{1}{(1+\sigma^2)^2}(1+\sigma^2)^{-(n-6)}\sigma^{n+1}d\sigma \\
& -\int_0^{\rho/\epsilon}\frac{1}{(1+\sigma^2)^2}(1+\sigma^2)^{-(n-6)}\sigma^{n-1}[(n-10)\sigma^2-n]d\sigma \\
& -\frac{(n^2-28)(n-4)}{n(n+2)}\int_0^{\rho/\epsilon}\frac{\sigma^{n+3}}{(1+\sigma^2)^4}(1+\sigma^2)^{-(n-6)}[(n+2)+4\sigma^2]d\sigma\Big\},
\end{align*}
where $r=\epsilon\sigma$. When $n=10$, we claim that the leading term of the constant in the brace on the right hand side of the above identity:
\begin{align*}
& \frac{1}{5}\int_0^{\rho/\epsilon}\frac{(4\sigma^2+10)^2}{(1+\sigma^2)^4}(1+\sigma^2)^{-4}\sigma^{11}d\sigma
    +\int_0^{\rho/\epsilon}\frac{1}{(1+\sigma^2)^2}(1+\sigma^2)^{-4}(4\sigma^2+10)\sigma^9d\sigma \\
 & -\frac{18}{5}\int_0^{\rho/\epsilon}\frac{1}{(1+\sigma^2)^4}(1+\sigma^2)^{-4}(4\sigma^2+12)\sigma^{13}d\sigma
\end{align*}
is a negative constant multiple of $|\log \epsilon|$. To see this, it is obviously true for the third term and the first two terms equal
\begin{align*}
&\frac{1}{5}\int_0^{\rho/\epsilon}\Big\{\sigma^2[(4\sigma^2+10)^2 -18\sigma^2(4\sigma^2+12)]+5(4\sigma^2+10)(1+\sigma^2)^2\Big\}(1+\sigma^2)^{-8}\sigma^9d\sigma\\
=& \frac{1}{5}\int_0^{\rho/\epsilon}(-36\sigma^6-46\sigma^4+220\sigma^2+50)(1+\sigma^2)^{-8}\sigma^9d\sigma,
\end{align*}
whose leading term is also a negative constant multiple of $|\log \epsilon|$.
For $n\geq11$, let $t=\sigma^2$, the limit of the coefficient of $|W(p)|^2\omega_{n-1}$ as $\epsilon \to 0$ is
\begin{align*}
 & 2^{n-7}\frac{(n-6)^2}{12(n-1)}\epsilon^4\Big\{\frac{n-2}{4n}\int_0^\infty\frac{(n+4t)^2}{(1+t)^{n-2}}t^{\tfrac{n}{2}}dt\no \\
 & +(n-6)\int_0^\infty\frac{1}{(1+t)^{n-4}}t^{\tfrac{n}{2}}dt-\int_0^\infty\frac{(n-10)t-n}{(1+t)^{n-4}}t^{\tfrac{n}{2}-1}dt\no \\
& -\frac{(n^2-28)(n-4)}{n(n+2)}\int_0^\infty\frac{(n+2)+4t}{(1+t)^{n-2}}t^{\tfrac{n}{2}+1}dt\Big\}.
\end{align*}
With the help of Beta function: $\int_0^\infty\frac{x^{\alpha-1}}{(1+x)^{\alpha+\beta}}dx=B(\alpha,\beta)=\frac{\Gamma(\alpha)\Gamma(\beta)}{\Gamma(\alpha+\beta)}$ for ${\rm Re}(\alpha)>0, {\rm Re}(\beta)>0$, we have
\begin{align*}
& \frac{n-2}{4n}\int_0^\infty\frac{(n+4t)^2}{(1+t)^{n-2}}t^{\tfrac{n}{2}}dt \\
=& \frac{n-2}{4n}\int_0^\infty\frac{(n-4)^2+8(n-4)(1+t)+16(1+t)^2}{(1+t)^{n-2}}t^{\tfrac{n}{2}}dt \\
=& \frac{n-2}{4n}\Big[(n-4)^2B(\tfrac{n}{2}+1,\tfrac{n}{2}-3)+8(n-4)B(\tfrac{n}{2}+1,\tfrac{n}{2}-4)+16B(\tfrac{n}{2}+1,\tfrac{n}{2}-5)\Big],
\end{align*}
$$(n-6)\int_0^\infty\frac{1}{(1+t)^{n-4}}t^{\tfrac{n}{2}}dt=(n-6)B(\tfrac{n}{2}+1,\tfrac{n}{2}-5),$$
$$-\int_0^\infty\frac{(n-10)t-n}{(1+t)^{n-4}}t^{\tfrac{n}{2}-1}dt=-(n-10)B(\tfrac{n}{2}+1,\tfrac{n}{2}-5)+nB(\tfrac{n}{2},\tfrac{n}{2}-4),$$
\begin{align*}
& -\frac{(n^2-28)(n-4)}{n(n+2)}\int_0^\infty\frac{(n+2)+4t}{(1+t)^{n-2}}t^{\tfrac{n}{2}+1}dt \\
=& -\frac{(n^2-28)(n-4)}{n(n+2)}\int_0^\infty\frac{4(1+t)^2+(n-6)(1+t)-(n-2)}{(1+t)^{n-2}}t^{\tfrac{n}{2}}dt \\
=& -\frac{4(n^2-28)(n-4)}{n(n+2)}B(\tfrac{n}{2}+1,\tfrac{n}{2}-5)-\frac{(n^2-28)(n-4)(n-6)}{n(n+2)}B(\tfrac{n}{2}+1,\tfrac{n}{2}-4) \\
& +\frac{(n^2-28)(n-4)(n-2)}{n(n+2)}B(\tfrac{n}{2}+1,\tfrac{n}{2}-3).
\end{align*}
Hence, the above limit of the coefficient of $|W(p)|^2\omega_{n-1}$ is rewritten as
\begin{align}\label{coef_|W_g|^2}
& 2^{n-7}\frac{(n-6)^2}{12(n-1)}\epsilon^4\Big\{nB(\tfrac{n}{2},\tfrac{n}{2}-4)\no \\
& +B(\tfrac{n}{2}+1,\tfrac{n}{2}-3)\Big[\frac{n-2}{4n}(n-4)^2+\frac{(n^2-28)(n-4)(n-2)}{n(n+2)}\Big] \no\\
& +B(\tfrac{n}{2}+1,\tfrac{n}{2}-4)\Big[\frac{2(n-2)(n-4)}{n}-\frac{(n^2-28)(n-4)(n-6)}{n(n+2)}\Big] \no\\
& +B(\tfrac{n}{2}+1,\tfrac{n}{2}-5)\Big[\frac{4(n-2)}{n}-n+10+n-6-\frac{4(n^2-28)(n-4)}{n(n+2)}\Big]\Big\} \no\\
=& 2^{n-7}\frac{(n-6)^2}{12(n-1)}B(\tfrac{n}{2}+1,\tfrac{n}{2}-5)\epsilon^4\Big\{(n-10)\no \\
& +\frac{(n-2)(\tfrac{n}{2}-4)(\tfrac{n}{2}-5)}{4n(n+2)(n-3)}(5n^2-2n-120)+\frac{\tfrac{n}{2}-5}{n(n+2)}(-n^3+8n^2+28n-176) \no\\
& +\frac{4}{n(n+2)}(-n^3+6n^2+30n-116)\Big\},
\end{align}
where we have used some elementary identities
\begin{eqnarray*}
B(\tfrac{n}{2}+1,\tfrac{n}{2}-3) = \frac{\Gamma(\tfrac{n}{2}+1)\Gamma(\tfrac{n}{2}-3)}{\Gamma(n-2)}= \frac{(\tfrac{n}{2}-4)(\tfrac{n}{2}-5)}{(n-3)(n-4)}B(\tfrac{n}{2}+1,\tfrac{n}{2}-5),
\end{eqnarray*}
$$B(\tfrac{n}{2}+1,\tfrac{n}{2}-4)=\frac{\tfrac{n}{2}-5}{n-4}B(\tfrac{n}{2}+1,\tfrac{n}{2}-5),$$
\begin{eqnarray*}
B(\tfrac{n}{2},\tfrac{n}{2}-4) = \frac{\Gamma(\tfrac{n}{2})\Gamma(\tfrac{n}{2}-4)}{\Gamma(n-4)}= \frac{n-10}{n}B(\tfrac{n}{2}+1,\tfrac{n}{2}-5).
\end{eqnarray*}
The constant in the last brace of \eqref{coef_|W_g|^2} when $n \geq 11$ is
\begin{align*}
& n-10+\frac{1}{16n(n+2)(n-3)}\Big\{(n-2)(n-8)(n-10)(5n^2-2n-120) \\
& +8(n-3)[(n-10)(-n^3+8n^2+28n-176)+8(-n^3+6n^2+30n-116)]\Big\} \\
=& n-10+\frac{1}{16n(n+2)(n-3)}\Big[-3n^5+2n^4+228n^3-264n^2-1760n-768\Big]\\
=&\frac{-3n^5+18n^4+52n^3-200n^2-800n-768}{16n(n+2)(n-3)}<0.
\end{align*}

On the other hand, we have
$$\int_M \varphi^{\frac{2n}{n-6}}d\mu_g=\int_{B_\rho}u_\epsilon^{\frac{2n}{n-6}}d\mu_g+\int_{B_{2\rho}\setminus\overline{B_\rho}}\varphi^{\frac{2n}{n-6}}d\mu_g=\int_{\mathbb{R}^n}u_\epsilon^{\frac{2n} {n-6}}dx+O(\epsilon^n).$$

Therefore, putting these facts together, we conclude by Lemma \ref{lem:sharp_Sobolev} that
\begin{align*}
&\frac{\int_M \varphi P_g \varphi d\mu_g}{\Big(\int_M \varphi^{\frac{2n}{n-6}}d\mu_g\Big)^{\frac{n-6}{n}}}\\
=&Y_6(S^n)+A_{n,\rho,\epsilon}|W(p)|^2 \omega_{n-1}+\left \{\begin{array}{ll}
O(\epsilon^4) &\hbox{~~if~~} n=10,\\
O(\epsilon^5 |\log \epsilon|) &\hbox{~~if~~} n=11,\\
O(\epsilon^5) &\hbox{~~if~~} n\geq 12,
\end{array}
\right.
\\
=&\left \{\begin{array}{ll}
Y_6(S^n)-C_n|W(p)|^2 \epsilon^4|\log \epsilon|+O(\epsilon^4) &\hbox{~~if~~} n=10,\\
Y_6(S^n)-C_n|W(p)|^2 \epsilon^4+o(\epsilon^4) &\hbox{~~if~~} n \geq 11,
\end{array}
\right.
\end{align*}
for some positive constant $C_n>0$. Consequently, choosing $\epsilon$ sufficiently small, we obtain $Y_6(M^n)<Y_6(S^n)$. This finishes the proof.
\end{proof}

Given a smooth positive function $f$ in $M^n$, we define a ``free" energy functional by
$$E_f[u]=\frac{1}{2}\int_M u P_g u d\mu_g-\frac{1}{2^\sharp}\int_M f |u|^{2^\sharp}d\mu_g.$$
Let $u_{,i}$ or $\nabla_i u$ denote the covariant derivatives of $u$ with respect to metric $g$ and $R_{ijk}^l$ be Riemannian curvature tensor of metric $g$. Notice that
$$\nabla_j \nabla_i\nabla^i  u=\nabla_i\nabla_j\nabla^i u+R_{iij}^k \nabla_k u=\nabla_i\nabla^i\nabla_j u-R_j^k \nabla_k u,$$
there holds
\begin{equation}\label{est_D^3 u}
\int_M |\nabla \Delta u|_g^2 d\mu_g=\int_M |\Delta \nabla_j u-R_j^k \nabla_k u|_g^2 d\mu_g.
\end{equation}
Under $g$-normal coordinates around a point, one gets
\begin{align*}
&\frac{1}{2}\Delta_g |\nabla^2 u|_g^2\\
=&|\nabla^3 u|_g^2+\langle\nabla \Delta \nabla_i u,\nabla \nabla^i u\rangle_g+u_{,ij}(R^l_{ijk}u_{,lk}+R_{j}^l u_{,il}+R_{ijk,k}^lu_{,l}+R_{ijk}^lu_{,lk}).
\end{align*}
Integrating the above identity over $M$ to show
\begin{equation}\label{int_Bochner_formular}
\int_M |\Delta \nabla u|_g^2 d\mu_g=\int_M|\nabla^3 u|_g^2 d\mu_g+\int_M O\big(|{\rm Rm}||\nabla^2 u|_g+|\nabla {\rm Rm}||\nabla u|_g\big)|\nabla^2 u|_gd\mu_g.
\end{equation}
From \eqref{est_D^3 u} and \eqref{int_Bochner_formular}, it yields that the following two norms are equivalent:
\begin{align*}
\|u\|_{H^3}:=&\Big(\int_{M}(|\nabla \Delta u|_g^2 d\mu_g+|\nabla^2 u|_g^2+|\nabla u|_g^2+u^2)d\mu_g\Big)^{\frac{1}{2}}\\
\approx&\Big(\int_{M}(|\nabla^3 u|_g^2 d\mu_g+|\nabla^2 u|_g^2+|\nabla u|_g^2+u^2)d\mu_g\Big)^{\frac{1}{2}}, ~~u \in H^3(M,g).
\end{align*}
Let $\|\cdot\|_p$ denote the norm of $L^p(M,g)$ for $1 \leq p \leq \infty$.

A sequence $\{u_k\}$ in $H^3(M,g)$ is called a Palais-Smale (P-S)$_\beta$ sequence for $E_f$ if $E_f[u_k] \to \beta \in \mathbb{R}$ and $DE_f[u_k] \to 0$ as $k \to \infty$. The energy $E_f$ satisfies (P-S)$_{\beta}$ condition if any Palais-Smale sequence of $E_f$ has a strongly convergent subsequence. We call that $P_g$ is coercive if there exists a constant $\mu(g)>0$ such that
$$\int_M \psi P_g \psi d\mu_g \geq \mu(g)\int_M \psi^2 d\mu_g, \hbox{~~for all~~} \psi \in H^3(M,g).$$

\begin{remark}\label{Einstein_Coercivity_P_g}
If $(M,g)$ is Einstein and of positive constant scalar curvature, from the factorization \eqref{factorization_GJMS} of $P_g$, the coercivity of $P_g$ is automatically satisfied.
\end{remark}

As an application, we adapt some arguments in Esposito-Robert \cite{er} to show some existence results of prescribed $Q$-curvature equation, whose solution may change signs due to  the lack of maximum principles (in general).
\begin{theorem}\label{generalized_main_Thm}
Let $(M^n,g)$ be a smooth closed manifold of dimension $n \geq 10$ and  $f$ be a smooth positive function in $M^n$. Suppose the Weyl tensor $W_g$ is nonzero at a maximum point of $f$ and $f$ satisfies the vanishing order condition \eqref{vanishing_order_con} at this maximum point. Assume $P_g$ is coercive, then there exists a nontrivial $C^{6,\mu} (0<\mu<1)$ solution to
\begin{equation}\label{change_sign_Q-eqn}
P_g u=f|u|^{2^\sharp-2}u \hbox{~~in~~} M.
\end{equation}
In addition, assume $(M,g)$ is  Einstein and of positive scalar curvature, then there exists a smooth solution to the $Q$-curvature equation
\begin{equation}\label{eq:Q-curvature}
P_g u=fu^{\frac{n+6}{n-6}}, u>0\hbox{~~in~~} M.
\end{equation}
\end{theorem}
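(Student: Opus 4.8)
The plan is to produce the solution by the Mountain Pass Lemma, applied to the energy $E_f$ of the excerpt for the sign-changing equation \eqref{change_sign_Q-eqn}, and to the truncated energy $\tilde E_f[u]=\frac12\int_M uP_gu\,d\mu_g-\frac1{2^\sharp}\int_M f(u_+)^{2^\sharp}d\mu_g$, with $u_+=\max\{u,0\}$, for the positive equation \eqref{eq:Q-curvature}. Since $P_g$ is coercive, the quadratic form $u\mapsto\int_M uP_gu\,d\mu_g$ together with the Gårding-type bound behind \eqref{int_Bochner_formular} is equivalent to $\|u\|_{H^3}^2$, so both functionals are well defined and $C^1$ on $H^3(M,g)$. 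The mountain pass geometry is routine: $E_f[0]=0$; since $2^\sharp>2$ one has $E_f[u]\ge c\|u\|_{H^3}^2-C\|u\|_{H^3}^{2^\sharp}>0$ on a small sphere; and $E_f[tu]\to-\infty$ as $t\to\infty$ for any $u$ with $\int_M f|u|^{2^\sharp}d\mu_g>0$. Hence the mountain pass value $\beta=\inf_\gamma\max_{t}E_f[\gamma(t)]$ is positive and there is a Palais--Smale sequence for $E_f$ at level $\beta$; the same applies verbatim to $\tilde E_f$.

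The decisive point is that $\beta$ lies below the compactness threshold
$$\beta^\ast:=\frac{3}{n}\Big(\frac{Y_6(S^n)}{(\max_M f)^{\frac{n-6}{n}}}\Big)^{\frac n6}.$$
To check this I would test with $\varphi=\eta_\rho u_\epsilon$ as in the proof of Proposition \ref{prop:Y_6(M)_n>9}, centered at the maximum point $p$ of $f$ with $W_g(p)\ne0$, and use $\max_{t>0}E_f[t\varphi]=\frac3n\big(A/B^{(n-6)/n}\big)^{n/6}$, where $A=\int_M\varphi P_g\varphi\,d\mu_g$ and $B=\int_M f|\varphi|^{2^\sharp}d\mu_g$. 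Proposition \ref{prop:Y_6(M)_n>9} already gives $A=\int_{\mathbb R^n}|\nabla\Delta_0u_\epsilon|^2dx-C_n|W(p)|^2\epsilon^4|\log\epsilon|+O(\epsilon^4)$ for $n=10$ and $A=\int_{\mathbb R^n}|\nabla\Delta_0u_\epsilon|^2dx-C_n|W(p)|^2\epsilon^4+o(\epsilon^4)$ for $n\ge11$, with $\int_{\mathbb R^n}|\nabla\Delta_0u_\epsilon|^2dx=Y_6(S^n)(\int_{\mathbb R^n}u_\epsilon^{2^\sharp}dx)^{(n-6)/n}$ by the equality case of Lemma \ref{lem:sharp_Sobolev}. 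For $B$, Taylor-expanding $f$ at $p$ (so $\nabla f(p)=0$), using $\int_{B_\rho}|x|^k u_\epsilon^{2^\sharp}dx=O(\epsilon^k)$ for $2\le k\le n-1$ and odd-symmetry, together with the vanishing order condition \eqref{vanishing_order_con} — which for $n\ge11$ suppresses the orders $k=2,3,4$ and for $n=10$ forces the $O(\epsilon^2)$ term to vanish via $\Delta_gf(p)=0$ so that the next surviving term is $O(\epsilon^4)$ — one obtains $B=f(p)\int_{\mathbb R^n}u_\epsilon^{2^\sharp}dx+O(\epsilon^4)$ for $n=10$ and $+O(\epsilon^5)$ for $n\ge11$. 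Since $\int_{\mathbb R^n}u_\epsilon^{2^\sharp}dx$ is a fixed constant and $f(p)=\max_M f$, this yields $A-Y_6(S^n)(B/\max_M f)^{(n-6)/n}=-C_n|W(p)|^2\epsilon^4|\log\epsilon|+O(\epsilon^4)<0$ for $n=10$ and $-C_n|W(p)|^2\epsilon^4+o(\epsilon^4)<0$ for $n\ge11$, for $\epsilon$ small; hence $\max_t E_f[t\varphi]<\beta^\ast$, so $\beta<\beta^\ast$. As $\varphi\ge0$, the identical estimate gives the same conclusion for $\tilde E_f$.

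Next I would establish that $E_f$ (resp. $\tilde E_f$) satisfies the Palais--Smale condition below $\beta^\ast$. A Palais--Smale sequence $\{u_k\}$ at level $\beta$ is bounded in $H^3(M,g)$, since $E_f[u_k]-\frac1{2^\sharp}DE_f[u_k]u_k=\frac3n\int_M u_kP_gu_k\,d\mu_g$ is bounded and $P_g$ is coercive; thus, up to a subsequence, $u_k\rightharpoonup u$ in $H^3$ with $u$ a weak solution of \eqref{change_sign_Q-eqn}. Writing $v_k=u_k-u$, the Brezis--Lieb lemma and the weak equation give $\int_M v_kP_gv_k\,d\mu_g-\int_M f|v_k|^{2^\sharp}d\mu_g\to0$ and $E_f[u_k]=E_f[u]+\frac3n\int_M v_kP_gv_k\,d\mu_g+o(1)$, while $E_f[u]=\frac3n\int_M uP_gu\,d\mu_g\ge0$. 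Since $v_k\rightharpoonup0$, its $H^3$-energy concentrates at scales on which the metric is Euclidean, so the sharp inequality of Lemma \ref{lem:sharp_Sobolev} holds asymptotically for $v_k$ with constant $Y_6(S^n)$; consequently $\int_M f|v_k|^{2^\sharp}d\mu_g\le\max_M f\,\big(\int_M v_kP_gv_k\,d\mu_g/Y_6(S^n)\big)^{n/(n-6)}+o(1)$, and comparing with the relation above forces $\int_M v_kP_gv_k\,d\mu_g\to L$ with either $L=0$ or $\frac3nL\ge\beta^\ast$. The latter contradicts $\frac3nL=\beta-E_f[u]\le\beta<\beta^\ast$, so $L=0$, and coercivity upgrades this to $u_k\to u$ in $H^3$; hence $u\ne0$ (because $E_f[u]=\beta>0$) solves \eqref{change_sign_Q-eqn}, and the elliptic regularity theory for the sixth-order operator $P_g$ (an integrability bootstrap followed by Schauder estimates) yields $u\in C^{6,\mu}$. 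The same argument applied to $\tilde E_f$ produces a nontrivial critical point $u$ of $\tilde E_f$, i.e.\ $P_gu=f(u_+)^{2^\sharp-1}$ in $M$.

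Finally, when $(M,g)$ is Einstein of positive scalar curvature, coercivity of $P_g$ is automatic by Remark \ref{Einstein_Coercivity_P_g}, and the factorization \eqref{factorization_GJMS} writes $P_g=L_1L_2L_3$ with $L_i=-\Delta_g+c_iR_g$ and all $c_i>0$ for $n\ge10$; since $R_g$ is a positive constant, each $L_i$ is coercive and $L_i^{-1}$ is positivity preserving by the maximum principle, so $P_g^{-1}=L_3^{-1}L_2^{-1}L_1^{-1}$ maps nonnegative functions to nonnegative functions. Applying this to $u=P_g^{-1}\big(f(u_+)^{2^\sharp-1}\big)$ gives $u\ge0$, hence $u=u_+$ and $P_gu=fu^{\frac{n+6}{n-6}}$; as $u\not\equiv0$, the strong maximum principle for each factor $L_i$ forces $u>0$, and the bootstrap argument then gives $u\in C^\infty$. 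I expect the Palais--Smale analysis below $\beta^\ast$ — specifically the asymptotic validity of the sharp $S^n$-Sobolev inequality along vanishing sequences together with the Brezis--Lieb splitting for the sixth-order functional — to be the main technical obstacle, the test-function estimate being essentially a corollary of Proposition \ref{prop:Y_6(M)_n>9} combined with the vanishing order condition \eqref{vanishing_order_con} on $f$.
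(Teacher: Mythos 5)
Your proposal is correct and follows essentially the same route as the paper: the same mountain-pass scheme with the test functions $\eta_\rho u_\epsilon$ from Proposition \ref{prop:Y_6(M)_n>9}, the same threshold $\frac{3}{n}Y_6(S^n)^{n/6}(\max_M f)^{\frac{6-n}{6}}$ obtained via the vanishing order condition \eqref{vanishing_order_con}, the Palais--Smale verification via Brezis--Lieb splitting plus an almost-optimal Sobolev inequality with constant $(1+\epsilon)Y_6(S^n)^{-1}$ (which the paper gets by the cut-and-paste argument of \cite{dhl}, and which you correctly flag as the step needing care), the bootstrap regularity, and the truncated functional with the factorization \eqref{factorization_GJMS} for the Einstein case. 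The only minor divergence is that you derive $u\geq 0$ from the positivity-preserving property of $L_3^{-1}L_2^{-1}L_1^{-1}$ applied to $f(u_+)^{2^\sharp-1}\geq 0$, whereas the paper first tests the truncated equation with $u_-$ and uses coercivity; both rest on the same factorization and second-order maximum principles.
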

\begin{proof}
By the assumptions, there exists $p \in M$ such that $f(p)=\max_{x \in M^n}f(x)$,~$W_g(p)\neq 0$ and the vanishing order condition \eqref{vanishing_order_con} of $f$ is true at $p$. Let
$$\gamma_\epsilon(t)=t \frac{\varphi}{\|f^{\frac{1}{2^\sharp}}\varphi\|_{2^\sharp}},$$
where $\varphi=\eta_\rho u_\epsilon$ is the test function chosen in Proposition \ref{prop:Y_6(M)_n>9}. By choosing $t_0$ large enough, we get $E[\gamma_\epsilon(t_0)]<0$. Let
$$\Gamma=\Big\{\gamma(t)\in C([0,t_0],H^3(M,g)); \gamma(0)=0, \gamma(t_0)=t_0 \frac{\varphi}{\|f^{\frac{1}{2^\sharp}}\varphi\|_{2^\sharp}}\Big\}.$$
From the coercivity of $P_g$ and Sobolev embedding theorem, we have
$$E_f[\tfrac{\varphi}{\|f^{\frac{1}{2^\sharp}}\varphi\|_{2^\sharp}}]=\frac{1}{2}\frac{\int_M \varphi P_g \varphi d\mu_g}{\|f^{\frac{1}{2^\sharp}}\varphi\|_{2^\sharp}^2}-\frac{1}{2^\sharp}\geq \frac{1}{2}C-\frac{1}{2^\sharp}.$$
It only suffices to estimate the term:
\begin{align*}
\int_M f \varphi^{\frac{2n}{n-6}}d\mu_g=&\int_{B_\rho}\Big[f(p)+\sum_{k=2}^4\frac{1}{k!}\partial_{i_1\cdots i_k}f(p)x^{i_1}\cdots x^{i_k}+O(|x|^5)\Big]u_\epsilon^{2^\sharp}dx+O(\epsilon^n)\\
=&f(p)\int_{\mathbb{R}^n}u_\epsilon^{\frac{2n}{n-6}}dx+\left \{\begin{array}{ll}
O(\epsilon^4) &\hbox{~~if~~} n=10,\\
o(\epsilon^4) &\hbox{~~if~~} n \geq 11,
\end{array}
\right.
\end{align*}
where the second equality follows from the vanishing order condition \eqref{vanishing_order_con} of $f$ at $p$. From this and some existing estimates in the proof of Proposition \ref{prop:Y_6(M)_n>9}, we conclude that there exist some sufficiently small $\epsilon>0$ and a constant $C'_n>0$ such that
\begin{align*}
\sup_{t \geq 0}E_f[\gamma_\epsilon(t)]=&E_f[\gamma_\epsilon(t^\ast)]=\frac{3}{n}\Big(\frac{\int_M \varphi P_g \varphi d\mu_g}{\|f^{\frac{1}{2^\sharp}}\varphi\|_{2^\sharp}^2}\Big)^{\frac{2^\sharp}{2^\sharp-2}}\\
\leq&\left \{\begin{array}{ll}
\frac{3}{n}(\max_{M}f)^{\frac{6-n}{6}}Y_6(S^n)^{\frac{n}{6}}-C'_n|W(p)|^2 \epsilon^4|\log \epsilon|+O(\epsilon^4) &\hbox{~~if~~} n=10,\\
\frac{3}{n}(\max_{M}f)^{\frac{6-n}{6}}Y_6(S^n)^{\frac{n}{6}}-C'_n|W(p)|^2 \epsilon^4+o(\epsilon^4) &\hbox{~~if~~} n \geq 11,
\end{array}
\right.
\end{align*}
where $t^\ast=\Big(\frac{\int_M \varphi P_g \varphi d\mu_g}{\|f^{\frac{1}{2^\sharp}}\varphi\|_{2^\sharp}^2}\Big)^{\frac{1}{2^\sharp-2}}$.
Then it follows from Mountain Pass Lemma (cf. \cite{ar} or  \cite[ Proposition 1]{er}) that
$$\beta=\inf\limits_{\gamma \in \Gamma}\sup\limits_{0 \leq t \leq t_0} E_f[\gamma(t)] \leq \sup\limits_{t \geq 0} E_f[\gamma_\epsilon(t)]<\frac{3}{n}Y_6(S^n)^{\frac{n}{6}}(\max_{M}f)^{\frac{6-n}{6}}$$
is a critical value of $E_f$ and there exists a (P-S)$_\beta$ sequence $\{u_k\}$ of $E_f$ in $H^3(M,g)$.

Next we claim that $E_f$ satisfies (P-S)$_\beta$ condition. For the above (P-S)$_\beta$ sequence $\{u_k\}$ satisfying $E_f[u_k] \to \beta$ and $DE_f[u_k] \to 0$ as $k \to \infty$, there holds
\begin{eqnarray*}
2\beta+o(\|u_k\|_{H^3})=2E_f[u_k]-\langle DE_f[u_k],u_k\rangle=\frac{6}{n}\int_M f|u_k|^{2^\sharp} d\mu_g.
\end{eqnarray*}
Together with the coercivity of $P_g$, one has
$$\mu(g)\|u_k\|_{H^3} \leq 2E_f[u_k]+\frac{2}{2^\sharp}\int_M f|u_k|^{2^\sharp} d\mu_g\leq C+o(\|u_k\|_{H^3}).$$
From this, we get $\{u_k\}$ is bounded in $H^3(M,g)$. Then up to a subsequence, as $k \to \infty$, $u_k\rightharpoonup u$ in $H^3(M,g)$ and $u_k \to u$ in $L^p(M,g)$ for $1 \leq p < 2^\sharp$. It is easy to verify that $u$ is a weak solution to \eqref{change_sign_Q-eqn}, that is, for all $\psi \in H^3(M,g)$,
$$\int_M \psi P_g ud\mu_g=\int_M f|u|^{2^\sharp-2}u \psi d\mu_g.$$
Choosing $\psi=u$, one has
$$\int_{M}uP_g ud\mu_g=\int_M f|u|^{2^\sharp}d\mu_g,$$
whence
$$E_f[u]=\frac{3}{n}\int_M f|u|^{2^\sharp} d\mu_g \geq 0.$$
Applying Brezis-Lieb lemma to
\begin{align*}
\int_M|\nabla \Delta u_k|_g^2 d\mu_g=&\int_M |\nabla \Delta u|_g^2 d\mu_g+\int_M |\nabla \Delta (u-u_k)|_g^2 d\mu_g+o(1),\\
\int_M f|u_k|^{2^\sharp} d\mu_g=&\int_M f|u|^{2^\sharp} d\mu_g+\int_M f|u-u_k|^{2^\sharp} d\mu_g+o(1),
\end{align*}
we have
\begin{align*}
E_f[u_k]-E_f[u]=&\frac{1}{2}\int_M|\nabla \Delta (u-u_k)|_g^2-\frac{1}{2^\sharp}\int_M f|u-u_k|^{2^\sharp} d\mu_g+o(1)\\
=&E_f[u-u_k]+o(1).
\end{align*}
Since $DE_f[u_k] \to 0$ in $(H^3(M,g))'$, we have
\begin{align*}
o(1)=&\langle u_k-u,DE_f[u_k]\rangle=\langle u_k-u, DE_f[u_k]-DE_f[u]\rangle\\
=&\int_M |\nabla \Delta (u-u_k)|_g^2d\mu_g-\int_M f|u-u_k|^{2^\sharp}d\mu_g+o(1).
\end{align*}
Thus, we obtain
$$\frac{3}{n}\int_M |\nabla \Delta (u-u_k)|_g^2 d\mu_g+o(1)=E_f[u_k-u]=E_f[u_k]-E_f[u]+o(1) \leq E_f[u_k]+o(1) \to \beta,$$
as $k \to \infty$, which yields
\begin{equation}\label{eq:(u_k-u)_H^3}
\int_M |\nabla \Delta (u-u_k)|_g^2 d\mu_g\leq \frac{n}{3}\beta+o(1).
\end{equation}
Mimicking a cut-and-paste argument as in \cite{dhl}, we obtain that given $\epsilon>0$, there exists a constant $B_\epsilon>0$ such that
$$\Big(\int_M |\psi|^{2^\sharp}d\mu_g\Big)^{\frac{2}{2^\sharp}}\leq (1+\epsilon)Y_6(S^n)^{-1}\int_M (|\nabla \Delta \psi|_g^2+|\nabla^2 \psi|_g^2+|\nabla \psi|_g^2)d\mu_g+B_\epsilon\int_M \psi^2 d\mu_g,$$
for all $\psi \in H^3(M,g)$. Choosing $\psi=u_k-u$ and $k$ sufficiently large to get
$$\Big(\int_M |u-u_k|^{2^\sharp} d\mu_g\Big)^{\frac{2}{2^\sharp}}\leq (1+\epsilon)Y_6(S^n)^{-1}\int_M |\nabla \Delta (u-u_k)|_g^2d\mu_g+o(1).$$
Hence we have
\begin{align*}
o(1)=&\int_M |\nabla \Delta (u-u_k)|_g^2d\mu_g-\int_M f|u-u_k|^{2^\sharp}d\mu_g\\
\geq& \int_M |\nabla \Delta (u-u_k)|_g^2d\mu_g\Big[1-(\max_M f)(1+\epsilon)^{\frac{2^\sharp}{2}}Y_6(S^n)^{-\frac{2^\sharp} {2}}\big(\int_M |\nabla \Delta (u-u_k)|_g^2d\mu_g\big)^{\frac{6}{n-6}}\Big].
\end{align*}
From \eqref{eq:(u_k-u)_H^3} and $\beta<\frac{3}{n}Y_6(S^n)^{\frac{n}{6}}(\max_{M}f)^{\frac{6-n}{6}}$, choosing $\epsilon$ sufficiently small, we get
$$o(1) \geq C \int_M|\nabla \Delta (u-u_k)|_g^2d\mu_g.$$
Combining the above inequality and the coercivity of $P_g$ to show that $u_k \to u$ in $H^3(M,g)$. Using the regularity result in Lemma \ref{lem:regularity} below, we know that $u \in C^{6,\mu}(M)$ for any $0<\mu<1$.

In addition, assume $(M,g)$ is Einstein and has positive constant scalar curvature, we define the modified energy in $H^3(M,g)$ by
$$E_f^+[u]=\frac{1}{2}\int_M uP_g u d\mu_g-\frac{1}{2^\sharp}\int_M fu_+^{2^\sharp}d\mu_g,$$
where $u_+=\max\{u,0\}$. Using the above similar arguments associated with Mountain Pass Lemma and mimicking what we did in Lemma \ref{lem:regularity} below for $E_f^+$, we get that there exists a nontrivial $C^6$-solution $u$ to
\begin{equation}\label{eq:u_change_sign}
P_g u=fu_+^{\frac{n+6}{n-6}}  \hbox{~~in~~} M.
\end{equation}
 Since $P_g$ is coercive by Remark \ref{Einstein_Coercivity_P_g}, testing equation \eqref{eq:u_change_sign} with $u_-=\min\{u,0\}$ we conclude that $u \geq 0$ in $M$. Together with $R_g$ being a positive constant and the factorization \eqref{factorization_GJMS} of GJMS operator:
$$\Big(-\Delta_g+\frac{(n-6)(n+4)}{4n(n-1)}R_g\Big)\Big(-\Delta_g+\frac{(n-4)(n+2)}{4n(n-1)}R_g\Big)\Big(-\Delta_g+\frac{n-2}{4(n-1)}R_g\Big)u\geq 0$$
and $u \not \equiv 0$ in $M$, we employ the maximum principle twice and strong maximum principle once for elliptic equations of second order to show that $u$ is a positive solution to the equation \eqref{eq:Q-curvature}.  From this and Schauder estimates for elliptic equations, we conclude that $u \in C^\infty(M)$. This completes the proof.
\end{proof}

We are now concerned with the regularity of mountain pass critical points for $E$.

\begin{lemma}\label{lem:regularity}
Let $(M,g)$ be a smooth closed Riemannian manifold of dimension $n\geq7$. Assume $u\in H^3(M,g)$ is a weak solution of equation \eqref{change_sign_Q-eqn}. Then $u \in C^{6,\mu}(M)$ for any $0< \mu<1$.
\end{lemma}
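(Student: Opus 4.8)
The plan is a Brezis--Kato type bootstrap, followed by elliptic $L^p$ and Schauder estimates. Since $2^\sharp=\tfrac{2n}{n-6}$ is the critical Sobolev exponent and no maximum principle is available for $P_g$, the only delicate point is the first gain of integrability. First I would rewrite \eqref{change_sign_Q-eqn} as a linear equation with a critical potential: setting $V:=f\,|u|^{2^\sharp-2}$, the function $u$ solves $P_g u=Vu$ weakly, and since $u\in H^3(M,g)\hookrightarrow L^{2^\sharp}(M,g)$ and $\tfrac{2^\sharp}{2^\sharp-2}=\tfrac n6$, one has $V\in L^{n/6}(M,g)$. For $\lambda>0$ large the self-adjoint elliptic operator $P_g+\lambda$ is invertible on the closed manifold $M$, and by $L^t$ elliptic estimates $(P_g+\lambda)^{-1}$ maps $L^t(M,g)$ into $W^{6,t}(M,g)$, which embeds into $L^r(M,g)$ with $\tfrac1r=\tfrac1t-\tfrac6n$ when $t<\tfrac n6$ and into $C^0(M)$ when $t>\tfrac n6$.

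The core step is to show $u\in L^q(M,g)$ for every $q<\infty$. Fix $\varepsilon>0$ small and, using absolute continuity of the integral, split $V=V_1+V_2$ with $V_2\in L^\infty(M,g)$ and $\|V_1\|_{L^{n/6}}<\varepsilon$. Rewriting $(P_g+\lambda)u=V_1u+(V_2+\lambda)u$, I would regard $u$ as the fixed point of the affine map $w\mapsto(P_g+\lambda)^{-1}\big(V_1w+(V_2+\lambda)u\big)$. By H\"older's inequality with $V_1\in L^{n/6}$, for each $q\in[2^\sharp,\infty)$ the linear part $w\mapsto(P_g+\lambda)^{-1}(V_1w)$ is bounded on $L^q(M,g)$ with operator norm at most $C_q\|V_1\|_{L^{n/6}}\le C_q\varepsilon$, hence a contraction once $\varepsilon$ is small; at the same time $(V_2+\lambda)u\in L^{2^\sharp}(M,g)$ and $(P_g+\lambda)^{-1}$ sends $L^{2^\sharp}$ into a strictly larger Lebesgue exponent than $2^\sharp$ (possibly $\infty$). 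Thus $u=(I-T)^{-1}h$ with $T=(P_g+\lambda)^{-1}(V_1\,\cdot\,)$ and $h=(P_g+\lambda)^{-1}\big((V_2+\lambda)u\big)$ belonging to that better space, so $u$ does too; iterating on $q$ (only finitely many steps are needed, so one choice of $\varepsilon$ suffices) gives $u\in L^q(M,g)$ for all $q<\infty$. This is exactly the device used for fourth order operators in Esposito--Robert \cite{er} and Djadli--Hebey--Ledoux \cite{dhl}, and it carries over verbatim.

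With $u\in L^q(M,g)$ for all finite $q$, the right-hand side $Vu=f\,|u|^{2^\sharp-2}u$ lies in $L^q(M,g)$ for all $q<\infty$, so $L^p$ elliptic regularity yields $u\in W^{6,q}(M,g)$ for every $q<\infty$, whence $u\in C^{5,\gamma}(M)$ for all $\gamma\in(0,1)$ by Sobolev embedding; in particular $u$ is bounded and $C^1$. Since $2^\sharp-2=\tfrac{12}{n-6}>0$, the map $t\mapsto|t|^{2^\sharp-2}t$ is $C^1$ on $\mathbb R$, so $f\,|u|^{2^\sharp-2}u\in C^1(M)\subset C^{0,\mu}(M)$ for every $\mu\in(0,1)$, and Schauder estimates for the sixth order operator $P_g$ (which has smooth coefficients) finally give $u\in C^{6,\mu}(M)$ for every $\mu\in(0,1)$. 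The main obstacle is the Brezis--Kato step: the naive bootstrap stalls because an $L^{n/6}$ potential precisely absorbs the Sobolev gain of $(P_g+\lambda)^{-1}$, and for a sixth order operator one cannot substitute truncations of $u$ into the energy quadratic form as in the second order case; it is the smallness $\|V_1\|_{L^{n/6}}<\varepsilon$ together with the contraction-mapping reformulation above that circumvents this difficulty.
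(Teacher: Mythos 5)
Your proof is correct, and its core is the same Brezis--Kato device the paper uses: split the critical potential $f|u|^{2^\sharp-2}\in L^{n/6}$ into a piece of small $L^{n/6}$ norm plus a bounded piece, invert the resulting small perturbation by a Neumann-series/contraction argument, bootstrap to $W^{6,q}$ for all finite $q$, and finish with Schauder theory. Where you genuinely differ is the choice of reference operator. The paper rewrites the equation as $(-\Delta_g+1)^3u=b+f|u|^{2^\sharp-2}u$, dumping \emph{all} lower-order terms $M_gu$, $\Delta_g^2u$, etc.\ into a right-hand side $b\in H^{-1}$; the price is the two preliminary stages $u\in H^4$ and then $u\in H^6$ before the $W^{6,s_k}$ bootstrap can start, but the reward is that every analytic input is second-order theory ($W^{2,p}$ estimates and Schauder for $-\Delta_g+1$, applied three times). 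You instead keep $P_g$ intact and work with $(P_g+\lambda)^{-1}:L^t\to W^{6,t}$, which shortcuts the intermediate Sobolev steps entirely but requires the Agmon--Douglis--Nirenberg $L^p$ and Schauder theory for a sixth-order operator on a closed manifold, together with the (standard, but worth stating) facts that $P_g$ is bounded below so $P_g+\lambda$ is invertible on $L^t$ for large $\lambda$, and that the inverses on $L^{2^\sharp}$ and on the larger exponent $L^{q_1}$ agree on their common domain so that $u=(I-T)^{-1}h$ really upgrades the integrability of the \emph{given} solution $u$. You correctly flag that the constants $C_q$ force $\varepsilon$ to be chosen after fixing the finite chain of exponents, which depends only on $n$; with that caveat spelled out, your argument is a clean and somewhat more streamlined alternative to the paper's, at the cost of invoking higher-order elliptic machinery rather than only second-order estimates.
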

\begin{proof}
Rewrite $P_g=(-\Delta_g)^3-M_g+\frac{n-6}{2}Q_g$ by \eqref{P_g}. Let $u\in H^3(M,g)$ be a weak solution of equation \eqref{change_sign_Q-eqn} and rewrite this equation as
\begin{align}
(-\Delta_g+1)^3u =& M_g u+3\Delta_g^2 u-3\Delta_g u+(1-\tfrac{n-6}{2}Q_g)u+f|u|^{2^\sharp-2}u \no\\
                    :=& b+f|u|^{2^\sharp-2}u, \label{eq:regular_rewrite}
\end{align}
where $b\in H^{-1}(M,g)$. By the Sobolev embedding theorem we have $u\in L^{2^\sharp}(M,g)$ and $|u|^{2^\sharp-2}\in L^{\frac{n}{6}}(M,g)$. Given $\epsilon>0$, there exist a $K_\epsilon>0$ and a decomposition of $f|u|^{2^\sharp-2}=h_\epsilon+\eta_\epsilon$ with $\|h_\epsilon\|_{\frac{n}{6}}\leq\epsilon$ , $\|\eta_\epsilon\|_\infty\leq K_\epsilon$. Inspired by the arguments in \cite[Proposition 3]{er}, for $s>1$ we define an operator
$$H_\epsilon:v\in L^s(M,g)\rightarrow(-\Delta_g+1)^{-3}(h_\epsilon v)\in L^s(M,g).$$
Indeed, from Sobolev embedding theorem, the standard $W^{2,p}$-regularity theory of the elliptic operator $-\Delta_g+1$ and H\"older's inequality, we have
\begin{align*}
    \|H_\epsilon v\|_s \leq& C\|(-\Delta_g+1)^{-3}(h_\epsilon v)\|_{W^{6,\frac{ns}{n+6s}}}\leq C\|h_\epsilon v\|_{\frac{ns}{n+6s}}\\
        \leq& C\|h_\epsilon\|_{\frac{n}{6}}\|v\|_s\leq C\epsilon\|v\|_s,
\end{align*}
where the constant $C$ is independent of $u$. Choose $\epsilon>0$ small enough, then the norm of $H_\epsilon$ on space $L^s(M,g)$ satisfies
$$\|H_\epsilon\|_{L^s\rightarrow L^s}\leq C\epsilon \leq \frac{1}{2}.$$
With the help of the operator $H_\epsilon$, we rewrite equation \eqref{eq:regular_rewrite} as
$$(Id-H_\epsilon)u=(-\Delta_g+1)^{-3}(b+\eta_\epsilon u),$$
then it is easy to show $Id-H_\epsilon:L^s\rightarrow L^s$ is bounded and invertible. We intend to prove $u\in H^6(M,g)$. To see this, notice that $(-\Delta_g+1)^{-3}(b+\eta_\varepsilon u)\in H^5(M,g)$ since $b+\eta_\varepsilon u\in H^{-1}(M,g)$. In the following, we first show $u\in H^4(M,g)$.
Applying the Sobolev embedding theorem and the $L^s$ boundedness of the operator $(Id-H_\varepsilon)^{-1}$ to show that if $n\leq10$, $u\in L^p(M,g)$ for all $p>1$,
and if $n>10$, $u\in L^{\frac{2n}{n-10}}(M,g)$. In the latter case we have $|u|^{2^\sharp-2}u\in L^{\frac{2n(n-6)}{(n+6)(n-10)}}(M,g)$. From equation \eqref{eq:regular_rewrite},we get
$$(-\Delta_g+1)^2u=(-\Delta_g+1)^{-1}b+(-\Delta_g+1)^{-1}(f|u|^{2^\sharp-2}u).$$
From $(-\Delta_g+1)^{-1}(|u|^{2^\sharp-2}u)\in W^{2,\frac{2n(n-6)}{(n+6)(n-10)}}(M,g)\hookrightarrow L^2(M,g)$ and $(-\Delta_g+1)^{-1}b\in L^2(M,g)$, we have $u\in H^4(M,g)$ in both cases.
Repeat the above step with $u\in H^4(M,g)$ and  $b\in L^2(M,g)$ in this situation. Notice that $(-\Delta_g+1)^{-3}(b+\eta_\epsilon u)\in H^6(M,g)$, similar arguments in the above step show that if $n\leq12$, $u\in L^p(M,g)$ for all $p>1$ and if $n>12$, $u\in L^{\frac{2n}{n-12}}(M,g)$.
In the latter case, we get $|u|^{2^\sharp-2}u\in L^2(M,g)$ due to $\frac{2n(n-6)}{(n+6)(n-12)}>2$. Hence we obtain $u\in H^6(M,g)$.

Finally we start with the classical bootstrap. We now construct a non-decreasing sequence $s_k\in\mathbb{R}\cup\{+\infty\}$ such that $u\in W^{6,s_k}(M,g)$ for all $k\in\mathbb{N}$.
Set $s_0=2$, and find $k\geq0$ such that $u\in W^{6,s_k}(M,g)$. Next we will define $s_{k+1}$ by induction. The Sobolev embedding theorem yields
$$b\in L^{\frac{ns_k}{n-2s_k}}(M,g),$$
with the convention that $\frac{ns_k}{n-2s_k}=+\infty$ if $s_k\geq\frac{n}{2}$, and
$$|u|^{2^\sharp-2}u\in L^{\frac{ns_k(n-6)}{(n-6s_k)(n+6)}}(M,g),$$
with the convention that $\frac{ns_k}{n-6s_k}=+\infty$ if $s_k\geq\frac{n}{6}$. In view of equation \eqref{eq:regular_rewrite}, we have $u\in W^{6,s_{k+1}}(M,g)$ with $s_{k+1}=\min\{\frac{ns_k}{n-2s_k},\frac{ns_k(n-6)}{(n-6s_k)(n+6)}\}$. If $s_k \in \mathbb{R}$ for all $k \in \mathbb{N}$, it must hold that $s_k\rightarrow+\infty$. Then we have $u\in W^{6,p}(M,g)$ for all $1\leq p<+\infty$. If $s_k=+\infty$ for all $k \geq k_0+1$, then $s_{k_0}\geq\frac{n}{6}$,
whence $b\in L^{\frac{n}{4}}(M,g)$ and $|u|^{2^\sharp-2}u\in L^q(M,g)$ for all $1\leq q<+\infty$. The equation \eqref{eq:regular_rewrite} leads to $u\in W^{6, \frac{n}{4}}(M)$. Repeating the argument twice, we obtain $u\in W^{6,p}(M,g)$ for all $1\leq p<+\infty$.
From this and the Sobolev embedding theorem, we have $u\in C^{5,\nu}(M)$ for all $0<\nu<1$.
By the regularity theory for the classical solution of the elliptic operator $-\Delta_g+1$, we get $u\in C^{6,\mu}(M)$ for some $0<\mu<1$. This completes the proof.
\end{proof}

\appendix
\section{Appendix: Proof of Lemma \ref{lem_P_g_r^{6-n}}}\label{app_A}

As in Proposition \ref{prop:Y_6(M)_n>9}, one may employ all computations under conformal normal coordinates of metric $g$ around a point in $M$. From Lee-Park \cite{lp} that up to a conformal factor, under $g$-conformal normal coordinates around this point, for all $N \geq 5$ there hold
$$\sigma_1(A_g)=0, \; \sigma_1 (A_g)_{,i}=0, \; \Delta_g \sigma_1(A_g)=-\frac{|W|_g^2}{12(n-1)}$$
at this point and 
$$\sqrt{\det g}=1+O(r^N) \hbox{~~near this point}.$$

To simplify the notation, we will omit the subscript $g$. Notice that
\begin{align*}
-P_g(r^{6-n})=&\Big[\Delta^3+\Delta \delta T_2 d+\delta T_2 d \Delta+\frac{n-2}{2}\Delta(\sigma_1(A)\Delta)+\delta T_4 d-\frac{n-6}{2}Q_g\Big](r^{6-n})\\
:=&\sum_{k=1}^6 I_k.
\end{align*}
Next, we begin to estimate all terms $I_1$-$I_6$.

For $I_1$, let $u=u(r)$ be a radial function, we have
\begin{align*}
       \Delta u(r)   =& \Delta_0 u(r)+O(r^{N-1})u'; \\
       \Delta^2 u(r) =& \Delta_0(\Delta_0 u(r)+O(r^{N-1})u')+O(r^{N-1})(\Delta_0 u(r)+O(r^{N-1})u')' \\
                       =& \Delta^2_0 u(r)+O(r^{N-1})u'''+O(r^{N-2})u''+O(r^{N-3})u'; \\
       \Delta^3 u(r) =& \Delta^3_0 u(r)+O(r^{N-1})u^{(5)}+O(r^{N-2})u^{(4)}+O(r^{N-3})u''' \\
                       & +O(r^{N-4})u''+O(r^{N-5})u'.
\end{align*}
Hence we obtain
\begin{eqnarray*}
       I_1=\Delta^3(r^{6-n})=-c_n\delta_p+O(r^{N-n}).
\end{eqnarray*}

To estimate $I_2$. Notice that
\begin{align*}
I_2=&\Delta\delta T_2d(r^{6-n}) \\
=& -\Delta[(T_2)_{ij}(r^{6-n})_{,j}]_{,i} \\
                           =& -\Delta[(T_2)_{ij,i}(r^{6-n})_{,j}+(T_2)_{ij}(r^{6-n})_{,ji}].
\end{align*}
Using
\begin{align}\label{Hess_radial_fnc}
    (r^{6-n})_{,j}  =& (6-n)r^{4-n}x^j, \no\\
    (r^{6-n})_{,ji} =& (4-n)(6-n)r^{2-n}x^ix^j+(6-n)r^{4-n}\delta_{ij}+O(r^{6-n}),
\end{align}
one has
\begin{align*}
(T_2)_{ij,i}(r^{6-n})_{,j} =& (n-10)\sigma_1(A)_{,j}(6-n)r^{4-n}x^j \\
                           =& (n-10)(6-n)\sigma_1(A)_{,j}x^j r^{4-n}
\end{align*}
and
\begin{align*}
    & (T_2)_{ij}(r^{6-n})_{,ji} \\
    =& [(n-2)\sigma_1(A)g_{ij}-8A_{ij}](6-n)[(4-n)r^{2-n}x^ix^j+r^{4-n}\delta_{ij}+O(r^{6-n})] \\
    =& (6-n)[4(n-4)\sigma_1(A)r^{4-n}-8(4-n)A_{ij}x^ix^jr^{2-n}]+O(r^{7-n}).
\end{align*}
Hence, we obtain
\begin{align*}
    & I_2 \\
    =& -(6-n)\Delta[(n-10)\sigma_1(A)_{,j}x^j r^{4-n}+4(n-4)\sigma_1(A)r^{4-n}-8(4-n)A_{ij}x^ix^jr^{2-n}] \\
    =& (n-6)\{(n-10)[4(4-n)\sigma_1(A)_{,j}x^jr^{2-n}+2(4-n)\sigma_1(A)_{,jk}x^jx^kr^{2-n} \\
    & +\sigma_1(A)_{,jkk}x^jr^{2-n}+2\Delta\sigma_1(A)r^{4-n}] +O(r^{5-n})\\
    & +4(n-4)[\Delta\sigma_1(A)r^{4-n}+2(4-n)\sigma_1(A)_{,k}x^kr^{2-n}+2(4-n)\sigma_1(A)r^{2-n}] \\
    & +8(n-4)[4(2-n)A_{ij}x^ix^jr^{-n}+\Delta A_{ij}x^ix^jr^{2-n}+4\sigma_1(A)_{,i}x^ir^{2-n}+2\sigma_1(A)r^{2-n}]\} \\
    =& (n-6)\Big\{-4(n-4)(3n-26)\sigma_1(A)_{,j}x^jr^{2-n}+6(n-6)\Delta\sigma_1(A)r^{4-n} \\
     & -2(n-10)(n-4)\sigma_1(A)_{,jk}x^jx^kr^{2-n}+(n-10)\sigma_1(A)_{,jkk}x^jr^{4-n}+O(r^{5-n})\\
     & -8(n-6)(n-4)\sigma_1(A)r^{2-n}-32(n-4)(n-2)A_{ij}x^ix^jr^{-n}+8(n-4)\Delta A_{ij}x^ix^jr^{2-n}\Big\} \\
    =& (n-6)\Big\{-4(n-4)(3n-26)\sigma_1(A)_{,ij}(p)x^ix^jr^{2-n} -\frac{n-6}{2(n-1)}|W(p)|^2r^{4-n} \\
     & -2(n-10)(n-4)\sigma_1(A)_{,ij}(p)x^ix^jr^{2-n}-4(n-6)(n-4)\sigma_1(A)_{,ij}(p)x^ix^jr^{2-n}\\
     & -16(n-4)(n-2)A_{ij,kl}(p)x^ix^jx^kx^lr^{-n}+8(n-4)\Delta A_{ij}x^ix^jr^{2-n}\Big\}+O(r^{5-n}) \\
    =& (n-6)\Big\{-2(n-4)(9n-74)\sigma_1(A)_{,ij}(p)x^ix^jr^{2-n} -\frac{n-6}{2(n-1)}|W(p)|^2r^{4-n} \\
    & -16(n-4)(n-2)A_{ij,kl}(p)x^ix^jx^kx^lr^{-n}+8(n-4)\Delta A_{ij}x^ix^jr^{2-n}\Big\}+O(r^{5-n}).
\end{align*}

To estimate
\begin{align*}
I_3=&\delta T_2d\Delta(r^{6-n})\\
=& -[(T_2)_{ij}(\Delta r^{6-n})_{,j}]_{,i} \\
=& -(T_2)_{ij,i}(\Delta r^{6-n})_{,j}-(T_2)_{ij}(\Delta r^{6-n})_{,ji}.
\end{align*}
Recall that $T_2=(n-2)\sigma_1(A)g-8A,$ then
$$(T_2)_{ij,i}=(n-10)\sigma_1(A)_{,j}.$$
Observe that
\begin{eqnarray*}
    \Delta r^{6-n}= 4(6-n)r^{4-n}+O(r^{N+4-n}),
\end{eqnarray*}
$$(\Delta r^{6-n})_{,j}=4(6-n)(4-n)x^jr^{2-n}+O(r^{N+3-n}),$$
and
\begin{eqnarray*}
(\Delta r^{6-n})_{,ji}= 4(6-n)(4-n)[(2-n)x^ix^jr^{-n}+r^{2-n}\delta_{ij}]+O(r^{4-n}),
\end{eqnarray*}
then it yields
\begin{align*}
    & (T_2)_{ij}(\Delta r^{6-n})_{,ji} \\
    =& 4(n-6)(n-4)[(n-2)\sigma_1(A)g_{ij}-8A_{ij}][(2-n)x^ix^jr^{-n}+r^{2-n}\delta_{ij}+O(r^{4-n})] \\
    =& 4(n-6)(n-4)[-(n-2)^2\sigma_1(A)r^{2-n}+n(n-2)\sigma_1(A)r^{2-n} \\
    & +8(n-2)r^{-n}A_{ij}x^ix^j-8\sigma_1(A)r^{2-n}]+O(r^{5-n})\\
    =& 4(n-6)(n-4)\Big[2(n-6)\sigma_1(A)r^{2-n}+8(n-2)r^{-n}A_{ij}x^ix^j\Big]+O(r^{5-n})\\
    =& 4(n-6)(n-4)[(n-6)\sigma_1(A)_{,ij}(p)x^ix^j r^{2-n}+4(n-2)r^{-n}(A_{ij,kl}(p)x^ix^jx^kx^l)]+O(r^{5-n}).
\end{align*}
Hence, we obtain
\begin{align*}
I_3=& -4(n-6)(n-4)\Big[(n-6)\sigma_1(A)_{,ij}(p)x^ix^j r^{2-n}+4(n-2)r^{-n}(A_{ij,kl}(p)x^ix^jx^kx^l)\Big]\\
&-4(n-6)(n-4)(n-10)r^{2-n}\sigma_1(A)_{,i}x^i+O(r^{5-n}) \\
    =& -8(n-8)(n-6)(n-4)\sigma_1(A)_{,ij}(p)x^ix^j r^{2-n} \\
    & -16(n-6)(n-4)(n-2)r^{-n}(A_{ij,kl}(p)x^ix^jx^kx^l)+O(r^{5-n}).
\end{align*}

We now compute
\begin{align*}
    I_4=& \frac{n-2}{2}\Delta(\sigma_1(A)\Delta(r^{6-n})) \\
    =& 2(n-2)(6-n)\Delta(\sigma_1(A)r^{4-n})+O(r^{N+4-n}) \\
    =& 2(n-2)(6-n)r^{2-n}[\Delta\sigma_1(A)r^2+2(4-n)\sigma_1(A)_{,i}x^i+2(4-n)\sigma_1(A)]+O(r^{N+2-n}) \\
    =& 2(n-2)(n-6)r^{2-n}\Big[\frac{1}{12(n-1)}|W(p)|^2r^2+3(n-4)\sigma_1(A)_{,ij}(p)x^i x^j\Big]+O(r^{5-n}).
\end{align*}

For $I_5$, from \eqref{Hess_radial_fnc} we have
\begin{align*}
        I_5=&\delta T_4d(r^{6-n})\\
        =& -((T_4)_{ij}{r^{6-n}}_{,j})_{,i} \\
                            =& -(T_4)_{ij,i}(r^{6-n})_{,j}-(T_4)_{ij}(r^{6-n})_{,ji}\\
                            =& (n-6)\big[r^{4-n}(T_4)_{ij,i}x^j-(n-4)r^{2-n}(T_4)_{ij}x^ix^j+r^{4-n}{\rm tr}(T_4)\big]\\
                            :=& (n-6)[I_1^{(5)}+I_2^{(5)}+I_3^{(5)}].
\end{align*}
Also from \cite{lp}, there holds
$${\rm Sym}\Big(R_{kl,ij}+\frac{2}{9}R_{nklm}R_{nijm}\Big)(p)=0 \hbox{~~and~~} R_{ij}(p)=0,$$
then
$$R_{kl,ij}(p)x^ix^jx^kx^l=-\frac{2}{9}W_{nklm}(p)W_{nijm}(p)x^ix^jx^kx^l.$$
Thus we have
\begin{eqnarray}\label{Akl,ij}
A_{kl,ij}(p)x^ix^jx^kx^l = -\frac{2}{9}\frac{1}{n-2}\sum_{k,l}(W_{iklj}(p)x^ix^j)^2-\frac{\sigma_1(A)_{,ij}(p)x^ix^jr^2}{n-2}.
\end{eqnarray}
To estimate $I_3^{(5)}$. From the definition of $T_4$, one gets
\begin{align*}
       {\rm tr}(T_4)=& -\tfrac{3n^3-12n^2-36n+64}{4}\sigma_1(A)^2+4(n^2-4n-12)|A|^2+n(n-6)\Delta\sigma_1(A)\\
                     =& -\frac{n(n-6)}{12(n-1)}|W(p)|^2+O(r).
\end{align*}
Thus one obtains
$$I_3^{(5)}=-\frac{n(n-6)}{12(n-1)}|W(p)|^2 r^{4-n}+O(r^{5-n}).$$
For the term $I_1^{(5)}$, it is easy to see
\begin{eqnarray*}
  I_1^{(5)}=r^{4-n} (T_4)_{ij,i} x^j=O(r^{5-n}).
\end{eqnarray*}
It remains to estimate the term $I_2^{(5)}$, one has
\begin{equation}\label{quadratic_T_4}
    (T_4)_{ij}x^ix^j = (n-6)\Delta\sigma_1(A)r^2-\frac{16}{n-4}B_{ij}x^ix^j+O(r^4).
\end{equation}
Notice that
\begin{align*}
    B_{ij}x^ix^j =& [C_{ijk,k}-A_{kl}W_{kijl}]x^ix^j=[(A_{ij,k}-A_{ik,j})_{,k}-A_{kl}W_{kijl}]x^ix^j \\
                 =& [\Delta A_{ij}-A_{ik,jk}+O(r)]x^ix^j
\end{align*}
and
\begin{align*}
    \Delta(A_{ij}x^ix^j) =& (A_{ij,k}x^ix^j+A_{ij}(x^i\delta_{jk}+x^j\delta_{ik}))_{,k} \\
                         =& (\Delta A_{ij})x^ix^j+2A_{ij,k}(x^i\delta_{jk}+x^j\delta_{ik})+2\sigma_1(A)+O(r^3) \\
                         =& (\Delta A_{ij})x^ix^j+4\sigma_1(A)_{,i}x^i+2\sigma_1(A)+O(r^3).
\end{align*}
By \eqref{Akl,ij}, one gets
\begin{align}\label{quadratic_Delta_A}
    (\Delta A_{ij})x^ix^j=& \Delta(A_{ij}x^ix^j)-4\sigma_1(A)_{,i}x^i-2\sigma_1(A)+O(r^3)\no\\
                          =& \Delta\Big[\frac{1}{2}A_{ij,kl}(p)x^ix^jx^kx^l+O(r^5)\Big]-4[\sigma_1(A)_{,ij}(p)x^ix^j +O(r^3)] \no\\
                          & -\sigma_1(A)_{,ij}(p)x^ix^j+O(r^3)\no\\
                          =&  \Delta\Big[-\frac{1}{9}\frac{1}{n-2}\sum_{k,l}(W_{iklj}(p)x^ix^j)^2-\frac{\sigma_1(A)_{,ij}(p)x^ix^jr^2}{2(n-2)}\Big]\no\\
                          & -5\sigma_1(A)_{,ij}(p)x^ix^j+O(r^3)\no\\
                          =& -\frac{2}{9}\frac{1}{n-2}\sum_{k,l,s}[(W_{ikls}(p)+W_{ilks}(p))x^i]^2+\frac{1}{12(n-2)(n-1)}|W(p)|^2r^2\no\\
                          &-6\frac{n-1}{n-2}\sigma_1(A)_{,ij}(p)x^ix^j+O(r^3),
\end{align}
where the last identity follows from the following two estimates:
\begin{align*}
    \Delta(\sigma_1(A)_{,ij}(p)x^ix^jr^2)
    =& \Delta(\sigma_1(A)_{,ij}(p)x^ix^j)r^2+2\nabla_s(\sigma_1(A)_{,ij}(p)x^ix^j)\nabla_sr^2 \\
    & +(\sigma_1(A)_{,ij}(p)x^ix^j)\Delta r^2\\
    =& 2\Delta\sigma_1(A)(p)r^2+8\sigma_1(A)_{,ij}(p)x^ix^j+2n\sigma_1(A)_{,ij}(p)x^ix^j+O(r^3) \\
    =& -\frac{1}{6(n-1)}|W(p)|^2r^2+2(n+4)\sigma_1(A)_{,ij}(p)x^ix^j+O(r^3)
\end{align*}
and
$$\Delta\sum_{k,l}(W_{iklj}(p)x^ix^j)^2 =2\sum_{k,l,s}[W_{iklj}(p)(x^i\delta_{js}+x^j\delta_{is})]^2
                                      =2\sum_{k,l,s}[(W_{ikls}(p)+W_{ilks}(p))x^i]^2,$$
which follows from
$$\Delta\Big[\sum_{k,l}(W_{iklj}(p)x^ix^j)^2\Big]=2\sum_{k,l}[(W_{iklj}(p)x^ix^j)\Delta(W_{sklt}(p)x^sx^t)+|\nabla(W_{iklj}(p)x^ix^j)|^2]$$
and $\Delta(W_{sklt}(p)x^sx^t)=(W_{sklt}(p)(x^s\delta_{it}+x^t\delta_{is}))_{,i}=2W_{sklt}(p)\delta_{st}=0$.
Using $A_{ik,jk}=A_{ik,kj}+R^l_{ijk}A_{lk}+R^l_{kjk}A_{il}=\sigma_1(A)_{,ij}+R_{lijk}A_{lk}+R_{lj}A_{il}$, one has
\begin{align*}
    & A_{ik,jk}x^ix^j \\
    =& \sigma_1(A)_{,ij}x^ix^j+R_{lijk}A_{lk}x^ix^j+R_{lj}A_{il}x^ix^j \\
    =& (\sigma_1(A)_{,ij}(p)+O(r))x^ix^j+(W_{lijk}(p)+O(r))(A_{lk,m}(p)x^m+O(r^2))x^ix^j+O(r^4) \\
    =& \sigma_1(A)_{,ij}(p)x^ix^j+O(r^3).
\end{align*}
Thus, one obtains
\begin{align}
    B_{ij}x^ix^j =& -\frac{2}{9}\frac{1}{n-2}\sum_{k,l,s}[(W_{ikls}(p)+W_{ilks}(p))x^i]^2+\frac{1}{12(n-2)(n-1)}|W(p)|^2r^2\no\\
    & -\frac{7n-8}{n-2}\sigma_1(A)_{,ij}(p)x^ix^j+O(r^3).\label{quadratic_B}
\end{align}
Inserting the above equations into \eqref{quadratic_T_4}, one gets
\begin{align*}
    & (T_4)_{ij}x^ix^j \\
    =& -\frac{n-6}{12(n-1)}|W(p)|^2r^2+\frac{32}{9(n-4)(n-2)}\sum_{k,l,s}[(W_{ikls}(p)+W_{ilks}(p))x^i]^2 \\
   & -\frac{4}{3(n-4)(n-2)(n-1)}|W(p)|^2r^2+\frac{16(7n-8)}{(n-4)(n-2)}\sigma_1(A)_{,ij}(p)x^ix^j+O(r^3),
\end{align*}
whence
\begin{align*}
I_2^{(5)}=&r^{2-n}\Big[\frac{(n-6)(n-4)}{12(n-1)}|W(p)|^2r^2-\frac{32}{9(n-2)}\sum_{k,l,s}\big((W_{ikls}(p)+W_{ilks}(p))x^i\big)^2 \\
& +\frac{4}{3(n-2)(n-1)}|W(p)|^2r^2-\frac{16(7n-8)}{n-2}\sigma_1(A)_{,ij}(p)x^ix^j\Big]+O(r^{5-n}).
\end{align*}
Combining all the terms together, one has
\begin{align*}
   I_5 =& \Big[-\frac{n^2-8n+8}{3(n-1)(n-2)}|W(p)|^2r^{4-n}-\frac{32}{9(n-2)}\sum_{k,l,s}\big((W_{ikls}(p)+W_{ilks}(p))x^i\big)^2 r^{2-n} \\
    &~~ -\frac{16(7n-8)}{n-2}\sigma_1(A)_{,ij}(p)x^ix^j r^{2-n}\Big](n-6)+O(r^{5-n}).
\end{align*}

Finally, from the definition of $Q_g$ in \eqref{Q_g^6}, it is not hard to show that
$$I_6=-\frac{n-6}{2}Q_g r^{6-n}=O(r^{6-n}).$$

Therefore, collecting all the terms $I_1$-$I_6$ together with (\ref{Akl,ij}) and (\ref{quadratic_Delta_A}), we conclude that
\begin{align*}
&-P_g(r^{6-n}) \\
=& -c_n \delta_p+(n-6)\Big[-\frac{16}{9}\sum_{k,l,s}\big((W_{ikls}(p)+W_{ilks}(p))x^i\big)^2 r^{2-n}-\frac{2(n-8)}{3(n-1)}|W(p)|^2r^{4-n}\\
&+\tfrac{64(n-4)}{9}\sum_{k,l}(W_{iklj}(p)x^ix^j)^2 r^{-n}-4(5n^2-66n+224)\sigma_1(A)_{,ij}(p)x^ix^jr^{2-n}\Big]+O(r^{5-n})\\
                =&-c_n \delta_p+(n-6)r^{-n}\Big\{\tfrac{64(n-4)}{9}\Big[\sum_{k,l}(W_{iklj}(p)x^ix^j)^2-\tfrac{r^2}{n+4}\sum_{k,l,s}\big((W_{ikls}(p)+W_{ilks}(p))x^i\big)^2\\
                &+\tfrac{3}{2(n+4)(n+2)}|W(p)|^2 r^4\Big]+\tfrac{16(3n-20)}{9(n+4)}r^2\Big[\sum_{k,l,s}\big((W_{ikls}(p)+W_{ilks}(p))x^i\big)^2-\tfrac{3}{n}|W(p)|^2 r^2\Big]\\
                &-4(5n^2-66n+224)r^2\Big[\sigma_1(A)_{,ij}(p)x^ix^j+\frac{|W(p)|^2}{12n(n-1)}r^2\Big]\\
                &+\frac{3n^4-16n^3-164n^2+400n+2432}{3(n+4)(n+2)n(n-1)}|W(p)|^2 r^4\Big\}+O(r^{5-n}),
\end{align*}
where each term in square brackets on the right hand side of the last identity is harmonic polynomial. This finishes the proof of Lemma \ref{lem_P_g_r^{6-n}}.


\begin{thebibliography}{99}
%
% The \bibitem commands:
% Please follow "Notice to Authors" for referencing.You
% must specify bold and italic fonts yourself.
%
\bibitem{ar}
A. Ambrosetti and P. Rabinowitz,\textit{ Dual variational methods in critical point theory and applications}, J. Functional Analysis 14 (1973), 349-381.

\bibitem{aubin}
T. Aubin, \textit{ $\acute{E}$quations diff$\acute{e}$rentielles non lin$\acute{e}$aires et probl$\grave{e}$me de Yamabe concernant la courbure scalaire}, J. Math. Pures Appl. (9) 55 (1976), no. 3, 269-296.

\bibitem{dhl}
Z. Djadli, E. Hebey and M. Ledoux, \textit{ Paneitz-type operators and applications}, Duke Math. J. 104 (2000), no. 1, 129-169.

\bibitem{er}
P. Esposito and F. Robert, \textit{ Mountain pass critical points for Paneitz-Branson operators}, Calc. Var. Partial Differential Equations 15 (2002), no. 4, 493-517.

\bibitem{fg}
C. Fefferman and Graham, \textit{ The Ambient metric}, Annals of Mathematics Studies, 178, Princeton University Press, Princeton, NJ, 2012. x+113 pp.

\bibitem{gjms}
C. Graham, R. Jenne, L. Mason and G. Sparling,\textit{ Conformally invariant powers of the Laplacian. I. Existence}, J. London Math. Soc. (2) 46 (1992), no. 3, 557-565.

\bibitem{gover}
A. Gover,\textit{ Laplacian operators and Q-curvature on conformally Einstein manifolds},  Math. Ann. 336 (2006), no. 2, 311-334.

\bibitem{gursky_hang_lin}
M. Gursky, F. Hang and Y. Lin, \textit{ Riemannian manifolds with positive Yamabe invariant and Paneitz operator}, preprint (2015), arXiv:1502.01050.

\bibitem{gur_mal}
M. Gursky and A. Malchiodi, \textit{A strong maximum principle for the Paneitz operator and a non-local flow for the $Q$-curvature,} to appear in JEMS, arXiv:1401.3216.

\bibitem{hy3}
F. Hang and P. Yang, \textit{ $Q$ curvature on a class of manifolds with dimension at least five,} preprint (2014), arXiv:1411.3926.

\bibitem{juhl}
A. Juhl, \textit{Explicit formulas for GJMS-operators and Q-curvatures}, Geom. Funct. Anal. 23 (2013), no. 4, 1278-1370.

\bibitem{lp}
J. Lee and T. Parker, \textit{ The Yamabe problem}, Bull. Amer. Math. Soc. (N.S.) 17 (1987), no. 1, 37-91.

\bibitem{lx}
Y. Li and J. Xiong, \textit{ Compactness of conformal metrics with constant $Q$-curvature. I}, preprint (2015), arXiv:1506.00739.

\bibitem{lieb}
E. Lieb,\textit{ Sharp constants in the Hardy-Littlewood-Sobolev and related inequalities}, Ann. of Math.
(2) 118 (1983), 349-374.

\bibitem{lions}
P. L. Lions, \textit{The concentration-compactness principle in the
calculus of variations: The limit case. Part I,}  Rev. Mat.
Iberoamericana 1 (1985), 145-201.

\bibitem{stein}
E. Stein, \textit{Singular integrals and differentiability properties of functions}, Princeton University Press, Princeton, New Jersey, 1970.

\bibitem{wunsch}
V. W\"unsch, \textit{ On conformally invariant differential operators}, Math. Nachr. 129 (1986), 269-281.

\end{thebibliography}
\end{document}